\renewcommand\@biblabel[1]{#1.}
\newtheorem{thm}{Theorem}[section]
\newtheorem{lem}[thm]{Lemma}
\newtheorem{cor}[thm]{Corollary}
\newtheorem{prop}[thm]{Proposition}
\newtheorem{df}[thm]{Definition}
\newtheorem{rem}[thm]{Remark}
\newcommand\mf{\mathfrak }
\newcommand{\Fin}{\textrm{Fin}} 
\DeclareMathOperator\Exh{Exh}
\newcommand{\R}{\mathbb{R}}
\newcommand{\cc}{\mf c}
\newcommand{\bb}{\mf b}
\newcommand{\dd}{\mf d}
\newcommand{\pp}{\mf p}
\newcommand{\uu}{\mf u}
\newcommand{\eps}{\varepsilon}
\newcommand{\I}{\mathcal I}
\newcommand{\J}{\mathcal J}
\newcommand{\add}{\tt add}
\newcommand{\non}{\tt non}
\newcommand{\cov}{\tt cov^*}
\title[Ideal weak QN-spaces]{Ideal weak QN-spaces}
\author{Adam Kwela}
\address{Institute of Mathematics, Faculty of Mathematics, Physics and Informatics, University of Gda\'{n}sk, ul.~Wita Stwosza 57, 80-308 Gda\'{n}sk, Poland}
\email{adam.kwela@ug.edu.pl}
\begin{document}
\begin{abstract}
This paper is devoted to studies of $\mathcal{I}$wQN-spaces and some of their cardinal characteristics. 

Recently, \v{S}upina in \cite{Supina} proved that $\mathcal{I}$ is not a weak P-ideal if and only if any topological space is an $\mathcal{I}$QN-space. Moreover, under $\mathfrak{p}=\mathfrak{c}$ he constructed a maximal ideal $\mathcal{I}$ (which is not a weak P-ideal) for which the notions of $\mathcal{I}$QN-space and QN-space do not coincide. In this paper we show that, consistently, there is an ideal $\mathcal{I}$ (which is not a weak P-ideal) for which the notions of $\mathcal{I}$wQN-space and wQN-space do not coincide. This is a partial solution to \cite[Problem 3.7]{BDS}. We also prove that for this ideal the ideal version of Scheepers Conjecture does not hold (this is the first known example of such weak P-ideal). 

We obtain a strictly combinatorial characterization of ${\tt non}(\mathcal{I}\text{wQN-space})$ similar to the one given in \cite{Supina} by \v{S}upina in the case of ${\tt non}(\mathcal{I}\text{QN-space})$. We calculate ${\tt non}(\mathcal{I}\text{QN-space})$ and ${\tt non}(\mathcal{I}\text{wQN-space})$ for some weak P-ideals. Namely, we show that $\mathfrak{b}\leq{\tt non}(\mathcal{I}\text{QN-space})\leq{\tt non}(\mathcal{I}\text{wQN-space})\leq\mathfrak{d}$ for every weak P-ideal $\mathcal{I}$ and that ${\tt non}(\mathcal{I}\text{QN-space})={\tt non}(\mathcal{I}\text{wQN-space})=\mathfrak{b}$ for every $\mathtt{F_\sigma}$ ideal $\mathcal{I}$ as well as for every analytic P-ideal $\mathcal{I}$ generated by an unbounded submeasure (this establishes some new bounds for $\mathfrak{b}(\mathcal{I},\mathcal{I},\textrm{Fin})$ introduced in \cite{Staniszewski}). As a consequence, we obtain some bounds for ${\tt add}(\mathcal{I}\text{QN-space})$. In particular, we get ${\tt add}(\mathcal{I}\text{QN-space})=\mathfrak{b}$ for analytic P-ideals $\I$ generated by unbounded submeasures. 

By a result of Bukovsk\'y, Das and \v{S}upina from \cite{BDS} it is known that in the case of tall ideals $\mathcal{I}$ the notions of $\mathcal{I}$QN-space ($\mathcal{I}$wQN-space) and QN-space (wQN-space) cannot be distinguished. Answering \cite[Problem 3.2]{BDS}, we prove that if $\mathcal{I}$ is a tall ideal and $X$ is a topological space of cardinality less than ${\tt cov^*}(\mathcal{I})$, then $X$ is an $\mathcal{I}$wQN-space if and only if it is a wQN-space.
\end{abstract}

\keywords{QN-spaces; wQN-spaces; Ideal; Ideal convergence; Quasi-normal convergence; Equal convergence; Bounding number; P-ideal}

\maketitle

\section{Preliminaries}

The paper is organized as follows. In this Section we introduce basic notions which will be used in further considerations. Section $2$ is devoted to our main results. We investigate $\non(\I\text{QN-space})$ and $\non(\I\text{wQN-space})$ for weak P-ideals. Among other, we show that $\non(\I\text{QN-space})=\non(\I\text{wQN-space})=\bb$ for every $\mathtt{F_\sigma}$ ideal $\I$ as well as for every analytic P-ideal $\I$. We also prove that, consistently, there is an ideal $\I$ (which is not a weak P-ideal) for which the notions of $\I$wQN-space and wQN-space do not coincide. This leads us to a conclusion that the ideal version of Scheepers Conjecture does not hold even for some weak P-ideals. In Section $3$ we show that for any tall ideal $\I$ and a topological space $X$ of cardinality less than $\cov(\I)$ the notions of $\I$wQN-space and wQN-space coincide (this solves \cite[Problem 3.2]{BDS}). Section $4$ is devoted to some remarks concerning additivity of $\I$QN-spaces.

\subsection{Ideals}

A collection $\mathcal{I}$ of subsets of some set $M$ is called an \emph{ideal on $M$} if it is closed under taking finite unions and subsets, contains all finite subsets of $M$ and is a proper subset of $\mathcal{P}(M)$. In this paper we consider only ideals on countable sets. In the theory of ideals a special role is played by the ideal $\Fin=[\omega]^{<\omega}$. 

Ideals $\mathcal{I}$ (on a set $M$) and $\mathcal{J}$ (on a set $N$) are \emph{isomorphic} if there is a bijection $f:N\to M$ such that: 
$$A\in\mathcal{I} \Leftrightarrow f^{-1}[A]\in\mathcal{J}.$$
Results of this paper, although formulated only for ideals on $\omega$, can be generalized for ideals on arbitrary countable sets with the use of isomorphisms.

In our further considerations we will also need the following order on ideals. Let $\I$ and $\J$ be two ideals on $\omega$. We say that \emph{$\I$ is below $\J$ in the Kat\v{e}tov order} and write $\mathcal{I}\leq_{K}\mathcal{J}$ if there is a function $f\colon\omega\to\omega$ such that $A\in\I\Longrightarrow f^{-1}[A]\in\J$ for all $A\subseteq\omega$. If $f$ is a finite-to-one function (i.e., $f^{-1}[\{n\}]$ is finite for all $n\in\omega$), then we say that \emph{$\I$ is below $\J$ in the Kat\v{e}tov-Blass order} and write $\mathcal{I}\leq_{KB}\mathcal{J}$.

A property of ideals can often be expressed by finding a critical ideal (in sense of some order on ideals) with respect to this property (see \cite[Theorem 1.3]{WR}, \cite[Theorem 2]{EUvsSD} or \cite[Theorems 2.1 and 3.3]{Solecki}). This approach is very effective, especially in the context of ideal convergence (see \cite{zReclawem} or \cite{zMarcinem}). One such result, regarding the topic of this paper, will be presented in Subsection \ref{subsection}.

An ideal on a set $M$ is called:
\begin{itemize}
	\item \emph{tall} if every infinite subset of $M$ contains an infinite member of the ideal;
	\item \emph{maximal} if it is maximal with respect to inclusion of ideals on $M$, i.e., there is no other (besides $\I$) ideal on $M$ containing $\I$;
	\item a \emph{P-ideal} if for every $\{A_n:\ n\in\omega\}\subseteq\mathcal{I}$ of $M$ there is $A\in\mathcal{I}$ with $A_n\setminus A$ finite for all $n\in\omega$;
	\item a \emph{weak P-ideal} if for every partition $\{A_n:\ n\in\omega\}\subseteq\mathcal{I}$ of $M$ there is $A\notin\mathcal{I}$ with $A_n\cap A$ finite for all $n\in\omega$.
\end{itemize} 
Clearly, every P-ideal is a weak P-ideal.

$\Fin\otimes\Fin$ is the ideal on $\omega\times\omega$ consisting of all sets $A\subseteq\omega\times\omega$ such that
$$\{n\in\omega:\ A\cap(\{n\}\times\omega)\text{ is infinite}\}\in\Fin.$$
The fact that $\I$ is a weak P-ideal can be expressed equivalently by $\Fin\otimes\Fin\not\leq_{KB}\I$ (for this and other equivalent definitions of this notion, including the ones using different orders on ideals, see \cite[Theorem 3.2]{Supina}).

If $\I$ is an ideal on $\omega$, then we define the ideal $\I\otimes\emptyset$ on $\omega\times\omega$, consisting of all $A\subseteq\omega\times\omega$ such that:
$$\left\{n\in\omega:\ A\cap(\{n\}\times\omega)\neq\emptyset\right\}\in\I.$$

\subsection{Submeasures on $\omega$}
\label{submeasures}

The space $2^X$ of all functions $f:X\to 2$ is equipped with the product topology (each space $2=\left\{0,1\right\}$ carries the discrete topology). We treat $\mathcal{P}(X)$ as the space $2^X$ by identifying subsets of $X$ with their characteristic functions. All topological and descriptive notions in the context of ideals on $X$ will refer to this topology. 

A map $\phi:\mathcal{P}(\omega)\rightarrow[0,\infty]$ is a \emph{submeasure on $\omega$} if $\phi(\emptyset)=0$ and 
$$\phi(A)\leq\phi(A\cup B)\leq\phi(A) + \phi(B)\text{ for all }A,B\subseteq \omega.$$ 
It is \emph{lower semicontinuous} if additionally $\phi(A)=\lim_{n\rightarrow\infty} \phi(A\cap \{0,\ldots,n\})$ for all $A\subseteq\omega$. For a lower semicontinuous submeasure $\phi$ on $\omega$ we define the collections:
$$\Fin(\phi)=\{A\subseteq\omega:\ \phi(A)\text{ is finite}\},$$
$$\Exh(\phi)=\left\{A\subseteq\omega:\ \lim_{n\rightarrow\infty}\phi(A\cap\{n,n+1,\ldots\})=0\right\}.$$

For any lower semicontinuous submeasure $\phi$ on $\omega$ the collection $\Exh(\phi)$ is an $\mathtt{F_{\sigma\delta}}$ P-ideal, while $\Fin(\phi)$ is an $\mathtt{F_\sigma}$ ideal containing $\Exh(\phi)$, provided that $\phi$ is unbounded (see \cite[Lemma 1.2.2]{Farah}). Mazur proved in \cite{Mazur} that every $\mathtt{F_\sigma}$ ideal is equal to $\Fin(\phi)$ for some lower semicontinuous submeasure $\phi$, while in \cite{Solecki} Solecki showed that every analytic P-ideal is equal to $\Exh(\phi)$ for some lower semicontinuous submeasure $\phi$ (see also \cite[Theorem 1.2.5]{Farah}). 

We will be particularly interested in analytic P-ideals generated by unbounded submeasures, i.e., such analytic P-ideals $\I$ that there exists a lower semicontinuous submeasure $\phi$ with $\I=\Exh(\phi)$ and $\phi(\omega)=\infty$. This class contains all $\mathtt{F_\sigma}$ P-ideals, as every such ideal is equal to $\Fin(\phi)=\Exh(\phi)$ for some lower semicontinuous submeasure $\phi$ (see \cite[Theorem 1.2.5]{Farah}). Good examples of $\mathtt{F_\sigma}$ P-ideals are \emph{summable ideals}, i.e., ideals of the form $\I_f=\{A\subseteq\omega:\ \sum_{n\in A}f(n)<\infty\}$ for $f\colon\omega\to\mathbb{R}_+$ such that $\sum_{n\in\omega}f(n)=\infty$ (cf. \cite[Example 1.2.3(c)]{Farah}). It is easy to see that a summable ideal $\I_f$ is tall if and only if $(f(n))$ converges to $0$. There are also analytic P-ideals generated by unbounded submeasures, which are not $\mathtt{F_\sigma}$. A good example is the class of tall density ideals in the sense of Farah, which are not Erd\H{o}s-Ulam ideals (i.e., the class $(\mathcal{Z}4)$ from \cite[Lemma 1.13.9]{Farah}). The class of tall density ideals contains all \emph{simple density ideals}, i.e., ideals of the form $\mathcal{Z}_g=\{A\subseteq\omega:\ \lim_{n\to\infty}\frac{|A\cap n|}{g(n)}=0\}$ for $g\colon\omega\to (0,\infty)$ such that $\lim_{n\to\infty}g(n)=\infty$ and $\left(\frac{n}{g(n)}\right)$ does not converge to $0$ (see \cite[Section 3]{BDFS} for details). By \cite[Proposition 1]{EUvsSD}, a simple density ideal $\mathcal{Z}_g$ is not an Erd\H{o}s-Ulam ideal if and only if the sequence $\left(\frac{n}{g(n)}\right)$ is unbounded (equivalently: $\mathcal{Z}_g$ does not contain the classical ideal $\mathcal{Z}_{\text{id}}$ of sets of asymptotic density zero -- cf. \cite[Theorem 2]{EUvsSD}). In \cite{KPST} it is shown that there are $\cc$ many non-isomorphic simple density ideals which are not Erd\H{o}s-Ulam ideals.

\subsection{Ideal convergence}

Let $\I$ be an ideal on $\omega$. A sequence of reals $(x_n)$ is \emph{$\I$-convergent} to $x\in \R$ if $\{n\in \omega:\ |x_n-x|\geq \varepsilon\}\in \I$ for any $\varepsilon>0$. In this case we write $x_n\xrightarrow{\I} x$. Suppose now that $X$ is a set, $(f_n)\subseteq\mathbb{R}^X$ and $f\in\mathbb{R}^X$. We say that $(f_n)$ is \emph{$\I$-quasi-normally convergent} to $f$ ($f_n\xrightarrow{\I\text{QN}} f$) if there is a sequence $(\varepsilon_n)$ of positive reals with $\varepsilon_n\xrightarrow{\I}0$ such that $\{n\in\omega:\ |f_n(x)-f(x)|\geq \varepsilon_n\}\in\I$ for each $x\in X$. Note that actually in this definition we can require that $(\eps_n)\subseteq(0,1)$.

The above notion generalizes its classical counterpart -- $\Fin$-quasi-normal convergence is called \emph{quasi-normal convergence} or \emph{equal convergence} and has been introduced independently by Bukovsk\'a in \cite{Bukovska} and by Cs{\'a}sz{\'a}r and Laczkovich in \cite{laczkovich-csaszar-equal-convergence-1975}. In \cite{laczkovich-csaszar-equal-convergence-1975} it was shown that quasi-normal convergence is equivalent to $\sigma$-uniform convergence.

The ideal version of quasi-normal convergence has been intensively studied e.g. in \cite{Dascha}, \cite{DDP}, \cite{fil-stan}, \cite{Marcin-Rafal}, \cite{zMarcinem} and \cite{Staniszewski}. 

The next proposition shows that if $\I\neq\J$, then $\I$QN-convergence differs from $\J$QN-convergence. In particular, $\I$QN-convergence differs from QN-convergence for all ideals $\I\neq\Fin$ (this solves the first part of \cite[Problem 3.7]{BDS}).

\begin{prop}
If $\I$ and $\J$ are ideals with $\I\setminus\J\neq\emptyset$, then for any nonempty set $X$ there is a sequence of real-valued functions defined on $X$, which $\I$QN-converges to $0$ but does not $\J$QN-converge to $0$.
\end{prop}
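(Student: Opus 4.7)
My plan is to construct the sequence using only constant-in-$x$ functions, so the choice of $X$ is essentially irrelevant. Fix $A\in\I\setminus\J$ and let $f_n\colon X\to\R$ be defined by $f_n(x)=\chi_A(n)$ for every $x\in X$ (where $\chi_A$ is the characteristic function of $A$). The claim is that this sequence $\I$QN-converges to $0$ but not $\J$QN-converges to $0$.

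For the positive direction I would define the witnessing sequence $(\eps_n)$ by $\eps_n=2$ if $n\in A$ and $\eps_n=1/(n+1)$ otherwise. Since for each $\delta>0$ the set $\{n:\eps_n\geq\delta\}$ is contained in $A$ together with a finite set, and $A\in\I$, we get $\eps_n\xrightarrow{\I}0$. Moreover, for every $x\in X$ the set $\{n:|f_n(x)|\geq\eps_n\}$ is empty: on $A$ one has $|f_n(x)|=1<2=\eps_n$, and off $A$ one has $|f_n(x)|=0<\eps_n$. So $f_n\xrightarrow{\I\text{QN}}0$.

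For the negative direction I would argue by contradiction. Suppose there were a positive sequence $(\delta_n)$ with $\delta_n\xrightarrow{\J}0$ such that $\{n:|f_n(x)|\geq\delta_n\}\in\J$ for every $x\in X$. Pick any $x\in X$; then this set equals $\{n\in A:\delta_n\leq 1\}$. Since $\delta_n\xrightarrow{\J}0$, the set $B=\{n:\delta_n>1\}$ belongs to $\J$. Therefore $A\cap(\omega\setminus B)=A\setminus B$ would belong to $\J$, which combined with $B\in\J$ would force $A=(A\setminus B)\cup(A\cap B)\in\J$, contradicting $A\notin\J$.

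There is essentially no obstacle; the only thing to be mindful of is to pick $(\eps_n)$ so that it is strictly positive and $\I$-null while still dominating $|f_n|$ wherever that matters, and to exploit the fact that $\J$-convergence of $(\delta_n)$ forces $(\delta_n)$ to be bounded by $1$ outside a $\J$-small set. The whole argument reduces to the elementary observation that a set in $\I\setminus\J$ cannot be written as the union of two $\J$-sets.
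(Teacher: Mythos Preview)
Your proof is correct and uses the same construction as the paper: constant-in-$x$ functions $f_n\equiv\chi_A(n)$ for a fixed $A\in\I\setminus\J$. The paper streamlines the bookkeeping by invoking the earlier remark that one may always take the witnessing sequence in $(0,1)$, so that $\{n:|f_n(x)|\geq\eps_n\}=A$ for \emph{every} such sequence and both directions fall out simultaneously, whereas you build an explicit witness for the positive part and treat the possibility $\delta_n>1$ separately in the negative part---but the underlying idea is identical.
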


\begin{proof}
Let $A\in\I\setminus\J$. Define a sequence $(f_n)\subseteq\mathbb{R}^X$ by:
$$f_n\left(x\right)=\left\{\begin{array}{ll}
1 & \mbox{\boldmath{if }} n\in A,\\
0 & \mbox{\boldmath{otherwise.}}
\end{array}\right.$$
Then for any sequence $(\eps_n)\subseteq (0,1)$ we have:
$$\left\{n\in\omega:\ |f_n(x)|\geq\eps_n\right\}=A\in\I\setminus\J$$
for every $x\in X$. Hence, $f_n\xrightarrow{\I QN}0$ but $(f_n)$ does not $\J$QN-converge to $0$.
\end{proof}

\subsection{$\I$QN-spaces and $\I$wQN-spaces}
\label{subsection}

For any $B\in[\omega]^\omega$ by $(e_B(n))$ we denote its increasing enumeration, i.e., $e_B:\omega\to B$ is the unique bijection satisfying $e_B(n)<e_B(n+1)$. 

Let $\I$ be an ideal on $\omega$. A topological space $X$ is called:
\begin{itemize}
	\item a \emph{QN-space} if any sequence $(f_n)\subseteq\mathbb{R}^X$ of continuous functions converging to zero quasi-normally converges to zero;
	\item a \emph{wQN-space} if for any sequence $(f_n)\subseteq\mathbb{R}^X$ of continuous functions converging to zero there is a subsequence $(f_{n_k})$ quasi-normally converging to zero, i.e., there is an infinite $B\subseteq\omega$ such that $(f_{e_B(n)})$ quasi-normally converges to zero;
	\item an \emph{$\I$QN-space} if any sequence $(f_n)\subseteq\mathbb{R}^X$ of continuous functions converging to zero $\I$-quasi-normally converges to zero;
	\item an \emph{$\I$wQN-space} if for any sequence $(f_n)\subseteq\mathbb{R}^X$ of continuous functions converging to zero there is a subsequence $(f_{n_k})$ $\I$-quasi-normally converging to zero, i.e., there is an infinite $B\subseteq\omega$ such that $(f_{e_B(n)})$ $\I$-quasi-normally converges to zero.
\end{itemize}
QN-spaces and wQN-spaces were introduced by Bukovsk\'y, Rec\l aw and Repick\'y in \cite{BRR} while their ideal counterparts were defined by Das and Chandra in \cite{Dascha}. Note that Bukovsk\'y, Das and \v{S}upina in \cite{BDS} use a slightly different definition of an $\I$wQN-space -- they allow the sequence $(n_k)$ to be arbitrary, not necessarily increasing. Each $\I$wQN-space (in our sense) fulfills their definition. For more about QN-spaces and wQN-spaces see e.g. \cite{Bukovsky}, \cite{BRR} or \cite{Tsaban}. $\I$QN-spaces and $\I$wQN-spaces are examined e.g. in \cite{Dascha}, \cite{BDS} or \cite{Supina}.

The following diagram presents relations between above notions.
\begin{center}
\begin{tikzpicture}
  \matrix (m) [matrix of math nodes,row sep=3em,column sep=4em,minimum width=2em]
  {
     \text{QN-space} & \text{wQN-space} \\
     \I\text{QN-space} & \I\text{wQN-space} \\};
  \path[-stealth]
    (m-1-1) edge node [left] {} (m-2-1)
            edge node [below] {} (m-1-2)
    (m-2-1.east|-m-2-2) edge node [below] {}
            node [above] {} (m-2-2)
    (m-1-2) edge node [right] {} (m-2-2);
\end{tikzpicture}
\end{center}

Moreover, we have some partial results showing interactions between ideal QN-spaces for different ideals. It is easy to observe that if $\I$ and $\J$ are two ideals on $\omega$ such that $\I\subseteq\J$, then any $\I$QN-space is a $\J$QN-space.

\begin{thm}(\v{S}upina, \cite[Proposition 4.3]{Supina})
\label{Supina1}
Let $\I$ and $\J$ be two ideals on $\omega$ such that $\I\leq_{KB}\J$. Then any $\I$QN-space is a $\J$QN-space.
\end{thm}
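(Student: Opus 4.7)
The plan is to push the given sequence forward along a Kat\v{e}tov-Blass reduction, apply the $\I$QN property of $X$ in the new indexing, and lift back. Fix a finite-to-one witness $f\colon\omega\to\omega$ for $\I\leq_{KB}\J$, so that $A\in\I$ implies $f^{-1}[A]\in\J$. Let $X$ be an $\I$QN-space and let $(g_n)\subseteq\mathbb{R}^X$ be a sequence of continuous functions converging pointwise to $0$; the goal is to produce positive reals $(\eps_n)$ with $\eps_n\xrightarrow{\J}0$ and $\{n:|g_n(x)|\geq\eps_n\}\in\J$ for every $x\in X$.

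The key move is to collapse each fibre of $f$: define
$$h_m(x)=\max\{|g_n(x)|:f(n)=m\},$$
with $h_m\equiv 0$ if the fibre is empty. Finite-to-oneness of $f$ makes each $h_m$ a finite maximum of continuous functions, hence continuous. Moreover, fixing $x\in X$ and $\delta>0$, the set $\{n:|g_n(x)|\geq\delta\}$ is finite, so its image under $f$ is a finite set outside of which $h_m(x)<\delta$; thus $h_m(x)\to 0$.

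Next, I would invoke the $\I$QN property of $X$ on $(h_m)$ to obtain positive reals $(\delta_m)$ with $\delta_m\xrightarrow{\I}0$ and $\{m:h_m(x)\geq\delta_m\}\in\I$ for every $x\in X$, and lift back by setting $\eps_n=\delta_{f(n)}$. Both required conditions then collapse to a single application of $\I\leq_{KB}\J$ to an $\I$-set: for each $\eta>0$,
$$\{n:\eps_n\geq\eta\}=f^{-1}\bigl[\{m:\delta_m\geq\eta\}\bigr]\in\J,$$
and for each $x\in X$, the inequality $|g_n(x)|\geq\eps_n=\delta_{f(n)}$ forces $h_{f(n)}(x)\geq\delta_{f(n)}$, whence
$$\{n:|g_n(x)|\geq\eps_n\}\subseteq f^{-1}\bigl[\{m:h_m(x)\geq\delta_m\}\bigr]\in\J.$$

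I do not expect a substantive obstacle; the entire content lies in the passage to fibrewise maxima, and that step relies crucially on $f$ being finite-to-one. Under only a Kat\v{e}tov (rather than Kat\v{e}tov-Blass) reduction the fibres could be infinite, so $h_m$ would have to be a supremum and there would be no reason for $(h_m)$ to be either continuous or pointwise convergent to $0$; that is the place where the argument would break down and the stronger hypothesis $\I\leq_{KB}\J$ earns its keep.
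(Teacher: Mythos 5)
Your proof is correct and complete: the fibrewise-maximum functions $h_m$ are continuous and pointwise null precisely because $f$ is finite-to-one, and both required $\J$-conditions do indeed reduce to applying the reduction $f$ to $\I$-sets. The paper only cites this result from \v{S}upina without reproducing a proof, and your argument is essentially the standard one for it, so there is nothing to compare beyond noting that your closing remark correctly locates the role of the Kat\v{e}tov--Blass (as opposed to Kat\v{e}tov) hypothesis.
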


\begin{thm}(Bukovsk\'y, Das and \v{S}upina, \cite[Corollary 3.4]{BDS})
\label{BDS}
For a non-tall ideal $\I$ on $\omega$ the notions of $\I$QN-space ($\I$wQN-space) and QN-space (wQN-space) coincide.
\end{thm}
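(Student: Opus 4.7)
The forward implications, QN-space $\Rightarrow$ $\I$QN-space and wQN-space $\Rightarrow$ $\I$wQN-space, are immediate because $\Fin \subseteq \I$: any witness to QN-convergence automatically witnesses $\I$QN-convergence. All the work lies in the reverse directions, and the non-tallness of $\I$ enters only through a fixed infinite $B = \{b_0 < b_1 < \ldots\} \subseteq \omega$ with $\I \cap \mathcal{P}(B) \subseteq \Fin$. Before attacking either direction, I would record two elementary consequences of this choice: (i) if $(\eps_n) \to_{\I} 0$ then the restricted sequence $(\eps_{b_k})_{k}$ converges to $0$ in the classical sense, because each set $\{n : \eps_n \geq 1/m\}$ lies in $\I$ and so meets $B$ in a finite set; and (ii) any $C \subseteq B$ belonging to $\I$ is finite.

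For the wQN direction, given $(f_n)$ continuous with $f_n \to 0$ pointwise on an $\I$wQN-space $X$, I would first apply $\I$wQN to produce an infinite $A \subseteq \omega$ and a witness $(\delta_n)$ for $\I$QN-convergence of $(f_{e_A(n)})$, and then thin $A$ through $B$: the sub-subsequence $(f_{e_A(b_k)})_k$ together with $(\delta_{b_k})_k$ should be a QN-witness. Indeed, by (i) one has $\delta_{b_k} \to 0$ classically, and by (ii) each exceptional set $\{k : |f_{e_A(b_k)}(x)| \geq \delta_{b_k}\}$ corresponds under $k \mapsto b_k$ to a subset of $B$ lying in $\I$, hence is finite. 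Thus $(f_n)$ has a QN-convergent subsequence and $X$ is a wQN-space.

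The QN direction requires a slightly more creative step, since QN-convergence cannot be obtained by merely restricting the sequence to a subset of indices. Starting from $(f_n)$ continuous with $f_n \to 0$ pointwise on an $\I$QN-space $X$, I would \emph{spread} the sequence along $B$: define $h_{b_k} := f_k$ and $h_m \equiv 0$ for $m \in \omega \setminus B$. Each $h_m$ is continuous and $h_m \to 0$ pointwise, so the $\I$QN property of $X$ yields $(\eps_m)$ with $\eps_m \to_{\I} 0$ and $\{m : |h_m(x)| \geq \eps_m\} \in \I$ for every $x$. Then $\delta_k := \eps_{b_k}$ ought to be a QN-witness for $(f_n)$: $\delta_k \to 0$ classically by (i), and for each $x$ the set $\{k : |f_k(x)| \geq \delta_k\}$ is in bijection (via $k \mapsto b_k$) with a subset of $B$ sitting inside $\{m : |h_m(x)| \geq \eps_m\} \in \I$, hence is finite by (ii).

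The only conceptual obstacle is locating the right manoeuvre in the QN case: one must pass \emph{upward} to a longer sequence indexed by all of $\omega$ before invoking the $\I$QN-space property, and only afterwards pull the resulting witness back along $k \mapsto b_k$. Everything else follows mechanically from the two properties (i) and (ii) of the non-tallness witness $B$.
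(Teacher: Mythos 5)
Your proof is correct. The paper gives no proof of this statement --- it is quoted from Bukovsk\'y, Das and \v{S}upina \cite[Corollary 3.4]{BDS} --- but your argument is complete and is the standard one: the forward implications from $\Fin\subseteq\I$, restriction of the subsequence and its witness along a non-tallness witness $B$ in the wQN case, and the ``spreading'' of $(f_n)$ along $B$ before invoking the $\I$QN property (followed by pulling the witness back via $k\mapsto b_k$) in the QN case all check out.
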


The above result tells us that non-tall ideals are not interesting in the context of ideal QN-spaces and ideal wQN-spaces. Below we present a result showing that this is the case also for ideals which are not weak P-ideals.

\begin{thm}(\v{S}upina, \cite[Theorem 1.4]{Supina})
\label{Supina2}
The following are equivalent for any ideal $\I$ on $\omega$:
\begin{itemize}
	\item[(a)] $\I$ is not a weak P-ideal;
	\item[(b)] every topological space is an $\I$QN-space.
\end{itemize}
\end{thm}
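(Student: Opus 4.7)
My plan is to prove both implications; direction (a) $\Rightarrow$ (b) is the easier one, while (b) $\Rightarrow$ (a) requires a careful construction.

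For (a) $\Rightarrow$ (b), I would use the equivalent characterization $\Fin\otimes\Fin\leq_{KB}\I$ recorded in the preliminaries: $\I$ fails to be a weak P-ideal exactly when there is a partition $\{A_k:k\in\omega\}\subseteq\I$ of $\omega$ such that any $B\subseteq\omega$ with $B\cap A_k$ finite for every $k$ already belongs to $\I$ (this is immediate once the Kat\v{e}tov--Blass reduction $f\colon\omega\to\omega\times\omega$ is in hand, taking $A_k=f^{-1}[\{k\}\times\omega]$). Given such a partition, a topological space $X$, and a pointwise-null sequence $(f_n)$ of continuous functions on $X$, I would set $\eps_n=1/(k+1)$ whenever $n\in A_k$. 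Then $\{n:\eps_n\geq 1/K\}=\bigcup_{k<K}A_k\in\I$, so $\eps_n\xrightarrow{\I}0$. For each $x\in X$ and each $k$, pointwise convergence $f_n(x)\to 0$ gives that $\{n\in A_k:|f_n(x)|\geq\eps_n\}\subseteq\{n:|f_n(x)|\geq 1/(k+1)\}$ is finite, so the distinguishing property of the partition forces $\{n:|f_n(x)|\geq\eps_n\}\in\I$, and hence $(f_n)$ $\I$QN-converges to zero.

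For (b) $\Rightarrow$ (a), I would work with the contrapositive: assuming $\I$ is a weak P-ideal, I would exhibit a concrete topological space that is not an $\I$QN-space. The candidate is $X=\{g\in\omega^\omega:\lim_n g(n)=\infty\}$ with the subspace topology from the Baire space, together with the sequence $f_n(g)=1/(g(n)+1)$. Each $f_n$ factors through the continuous coordinate projection $g\mapsto g(n)$ and is therefore continuous, and $f_n\to 0$ pointwise on $X$ by the very definition of $X$. What remains is to defeat any candidate $(\eps_n)\subseteq(0,1)$ with $\eps_n\xrightarrow{\I}0$ by producing some $g\in X$ such that $\{n:f_n(g)\geq\eps_n\}\notin\I$.

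The crux—and the step I expect to be the main obstacle—is the diagonal argument against an arbitrary such $(\eps_n)$. The plan is to partition $\omega$ into the blocks $D_K=\{n:K\leq 1/\eps_n<K+1\}$ for $K\geq 1$, each of which lies in $\I$ because $D_K\subseteq\{n:\eps_n>1/(K+1)\}$. The weak P-ideal hypothesis then yields $A\notin\I$ with $A\cap D_K$ finite for every $K$. I then define $g\in\omega^\omega$ by $g(n)=K-1$ for $n\in A\cap D_K$ and $g(n)=n$ otherwise; finiteness of each $A\cap D_K$ forces the label $K$ attached to points of $A$ to tend to infinity along $A$, so $g(n)\to\infty$ and $g\in X$. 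On $A$ one has $g(n)=K-1\leq 1/\eps_n-1$, i.e.\ $f_n(g)=1/(g(n)+1)\geq\eps_n$, so $\{n:f_n(g)\geq\eps_n\}\supseteq A\notin\I$. The delicate point is tuning $g$ so that it still tends to infinity (so that $g\in X$ and the pointwise convergence is preserved) while simultaneously keeping $A$ inside the error set; this balance is what the weak P-property of $\I$ makes possible, and it is precisely what fails when $\I$ is not a weak P-ideal.
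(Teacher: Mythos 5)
The paper does not prove this statement --- it is quoted verbatim from \v{S}upina's article as Theorem \ref{Supina2} --- so there is no in-paper argument to compare yours against; I can only assess your proof on its own terms, and it is correct. For (a)$\Rightarrow$(b) you use exactly the negation of the definition of weak P-ideal (the detour through $\Fin\otimes\Fin\leq_{KB}\I$ is unnecessary but harmless), and the verification that $\{n:|f_n(x)|\geq\eps_n\}$ meets each block $A_k$ in a finite set is right; note that continuity plays no role here, which is consistent with (b) holding for \emph{every} topological space. For (b)$\Rightarrow$(a) your space $X=\{g\in\omega^\omega:\lim_n g(n)=\infty\}$ with $f_n(g)=1/(g(n)+1)$ works: for a candidate witness $(\eps_n)\subseteq(0,1)$ the blocks $D_K=\{n:K\leq 1/\eps_n<K+1\}$, $K\geq 1$, do partition $\omega$ into members of $\I$ (since $D_K\subseteq\{n:\eps_n\geq 1/(K+2)\}\in\I$), the weak P-property supplies $A\notin\I$ meeting each $D_K$ finitely, and your $g$ both tends to infinity and satisfies $f_n(g)=1/K\geq\eps_n$ on $A\cap D_K$, so $A\subseteq\{n:f_n(g)\geq\eps_n\}\notin\I$. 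Two small points worth making explicit if you write this up: you may assume $(\eps_n)\subseteq(0,1)$ by the remark in the preliminaries (otherwise the $D_K$ for $K\geq 1$ need not cover $\omega$), and infinitely many $D_K$ are nonempty since otherwise $\omega$ would be a finite union of members of $\I$.
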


Observe that the above result implies that for non-weak P-ideals $\I$ any topological space is also an $\I$wQN-space.

\subsection{Some cardinal invariants}

Recall the definition of the \emph{pseudointersection number}:
$$\pp=\min\left\{|\mathcal{A}|:\ \mathcal{A}\subseteq[\omega]^\omega\ \wedge\ \forall_{\mathcal{A}_0\in[\mathcal{A}]^{<\omega}}\ \bigcap\mathcal{A}_0\neq\emptyset\ \wedge\ \forall_{\textit{S}\in[\omega]^\omega}\exists_{\textit{A}\in\mathcal{A}}\ |\textit{S}\setminus\textit{A}|=\omega\right\}.$$

\v{S}upina proved that, consistently, the notions of $\I$QN-space and QN-space can be distinguished even for weak P-ideals: if $\pp=\cc$, then there are a maximal ideal $\I$ which is a weak P-ideal and an $\I$QN-space of cardinality $\cc$ which is not a QN-space (\cite[Theorem 1.5]{Supina}). However, the space in this example is a wQN-space. One of the motivations of this paper is to distinguish the notions of wQN-space and $\I$wQN-space in the case of weak P-ideals. This is done in Theorem \ref{ex-IwQN}.

In our further considerations we will also need the following notions. Let $\I$ be an ideal on $\omega$. If $f,g\in\omega^\omega$, then we write $f\leq_\I g$ if $\{n\in\omega:\ f(n)>g(n)\}\in\I$. The cardinals $\bb_\I$ and $\dd_\I$ denote the minimal cardinalities of an unbounded and dominating family in $\omega^\omega$ ordered by $\leq_\I$. We write $\bb_\Fin=\bb$ and $\dd_\Fin=\dd$ for convenience. In it easy to observe that $\bb\leq\bb_\I\leq\dd_\I\leq\dd$ for any ideal $\I$ and that $\bb_\I=\dd_\I$ for any maximal ideal $\I$.

Let $\I$ be a weak P-ideal on $\omega$. Then:
\begin{itemize}
	\item $\non(\I\text{QN-space})$ denotes the minimal cardinality of a perfectly normal topological space which is not an $\I$QN-space;
	\item $\non(\I\text{wQN-space})$ denotes the minimal cardinality of a perfectly normal topological space which is not an $\I$wQN-space.
\end{itemize}

In the case of $\I=\Fin$, by a result of Bukovsk\'y, Rec\l aw and Repick\'y we know the exact values: $\non(\text{QN-space})=\non(\text{wQN-space})=\bb$ (cf. \cite[Corollary 3.2]{BRR}). In \cite[Corollary 6.5]{Supina} it is shown that $\non(\I\text{QN-space})$ has a strictly combinatorial characterization. In Theorem \ref{main} we obtain a similar characterization in the case of $\non(\I\text{wQN-space})$.

\section{Uniformity of $\I$QN-spaces and $\I$wQN-spaces}

\begin{df}
For a weak P-ideal $\I$ on $\omega$ let $\kappa(\I)$ denote the minimal cardinality of a family $\mathcal{A}\subseteq\Fin^\omega$ with the property that for every partition $(B_n)_{n\in\omega\cup\{-1\}}$ of $\omega$ satisfying $e_B^{-1}[B_n]\in\I$ for all $n\in\omega$, where $B=\bigcup_{n\in\omega}B_n$, there is $(A_n)\in\mathcal{A}$ such that 
$$e_B^{-1}\left[\bigcup_{n\in\omega}A_n\cap B_n\right]\notin\I.$$
\end{df}

\begin{rem}
Notice that $\kappa(\I)$ can be defined in a slightly different (and perhaps less complicated) way. For an ideal $\I$ on $\omega$ denote by $\mathcal{P}_\I$ the family of all partitions of $\omega$ into sets belonging to $\I$. Then 
$$\kappa(\I)=\min\left\{|\mathcal{A}|:\ \mathcal{A}\subseteq\Fin^\omega\ \wedge\ \forall_{B\in[\omega]^\omega}\ \forall_{(D_n)\in\mathcal{P}_\I}\ \exists_{(A_n)\in\mathcal{A}} \bigcup_{n\in\omega}e_B^{-1}[A_n]\cap D_n\notin\I \right\}.$$
Indeed, given $(B_n)_{n\in\omega\cup\{-1\}}$ such as above, it suffices to put $B=\bigcup_{n\in\omega}B_n$ and $D_n=e_B^{-1}[B_n]$ for each $n\in\omega$. On the other hand, given $B\in[\omega]^\omega$ and $(D_n)\in\mathcal{P}_\I$, put $B_{-1}=\omega\setminus B$ and $B_n=e_B[D_n]$ for each $n\in\omega$ to obtain the required partition.
\end{rem}

\begin{thm}
\label{nonIwQN}
Let $\I$ be a weak P-ideal on $\omega$. The following are equivalent for any set $X$:
\begin{itemize}
	\item[(a)] $|X|<\kappa(\I)$;
	\item[(b)] for any sequence of real-valued functions defined on X which converges to some $f\in\mathbb{R}^X$ one can find its subsequence $\I$QN-converging to $f$;
	\item[(c)] $X$ with the discrete topology is an $\I$wQN-space.
\end{itemize}
\end{thm}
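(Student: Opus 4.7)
My plan is to establish the cycle $(a) \Rightarrow (b) \Leftrightarrow (c) \Rightarrow (a)$, with the heart of the proof being the interplay between $\kappa(\I)$ and the existence of $\I$QN-subsequences. The equivalence $(b) \Leftrightarrow (c)$ is cheap: on discrete $X$ every real-valued function is continuous, so $(b)$ with $f=0$ yields $(c)$; conversely, replacing $(f_n)$ by $(f_n - f)$ turns pointwise convergence to $f$ into pointwise convergence to $0$ of a sequence of continuous functions on discrete $X$, and $\I$QN-convergence is preserved under adding back $f$, so the subsequence produced by $(c)$ witnesses $(b)$.

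For $(a) \Rightarrow (b)$, assume $|X| < \kappa(\I)$ and $(f_n) \to f$ pointwise; set $g_n = f_n - f$ and, for each $x \in X$ and $k \in \omega$,
\[
A_k^x \;=\; \{n \in \omega : |g_n(x)| \geq 1/(k+1)\},
\]
which is finite by pointwise convergence. The family $\{(A_k^x)_k : x \in X\} \subseteq \Fin^\omega$ has cardinality $\leq |X| < \kappa(\I)$, so it fails to witness the definition of $\kappa(\I)$: there is a partition $(B_n)_{n \in \omega \cup \{-1\}}$ of $\omega$ with $B = \bigcup_{n \in \omega} B_n$ infinite, $e_B^{-1}[B_n] \in \I$ for every $n$, and $e_B^{-1}\bigl[\bigcup_k A_k^x \cap B_k\bigr] \in \I$ for every $x \in X$. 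Define $\eps_m = 1/(k+1)$ whenever $e_B(m) \in B_k$. A direct check shows $\eps_m \xrightarrow{\I} 0$ (since $\{m : \eps_m \geq 1/j\}$ is a finite union of $e_B^{-1}[B_k]$'s) and, for every $x$,
\[
\{m : |g_{e_B(m)}(x)| \geq \eps_m\} \;=\; e_B^{-1}\bigl[\bigcup_k A_k^x \cap B_k\bigr] \;\in\; \I,
\]
so $(f_{e_B(m)})$ $\I$QN-converges to $f$.

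For $\neg(a) \Rightarrow \neg(b)$, fix (using $|X| \geq \kappa(\I)$) a witnessing family $\{(A_k^x)_k : x \in X\}$. Replacing each sequence by $(\bigcup_{j \leq k} A_j^x)_k$ only enlarges $\bigcup_k A_k^x \cap B_k$, so we may assume $A_0^x \subseteq A_1^x \subseteq \cdots$ for every $x$. Put $f_n(x) = \sup\{1/(k+1) : n \in A_k^x\}$ (with $\sup\emptyset = 0$); by nestedness $|f_n(x)| \geq 1/(k+1) \Leftrightarrow n \in A_k^x$, hence $f_n \to 0$ pointwise on $X$. Now suppose some subsequence $(f_{e_B(m)})$ $\I$QN-converges to $0$ via $(\eps_m) \subseteq (0,1)$ with $\eps_m \xrightarrow{\I} 0$. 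The sets $C_k = \{m : 1/(k+2) \leq \eps_m < 1/(k+1)\}$ all lie in $\I$, so taking $B_k = e_B[C_k]$ for $k \in \omega$ and $B_{-1} = \omega \setminus B$ yields an admissible partition. The witnessing property supplies $x \in X$ with $e_B^{-1}[\bigcup_k A_k^x \cap B_k] \notin \I$; yet for $m \in C_k$ and $e_B(m) \in A_k^x$ one has $|f_{e_B(m)}(x)| \geq 1/(k+1) > \eps_m$, so $e_B^{-1}[\bigcup_k A_k^x \cap B_k] \subseteq \{m : |f_{e_B(m)}(x)| \geq \eps_m\}$, a contradiction.

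The main bookkeeping is aligning the three shifted indexings: the thresholds $1/(k+1)$ defining $A_k^x$, the dyadic brackets $[1/(k+2),1/(k+1))$ defining $C_k$, and the $\eps_m$-values $1/(k+1)$ assigned via the blocks $B_k$. I also need the quick observation that the cumulative-union reduction preserves the $\kappa(\I)$-witnessing property; beyond that, everything is a straightforward unpacking of the two definitions.
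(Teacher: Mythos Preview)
Your proof is correct and follows essentially the same route as the paper's: the same sets $A_k^x$, the same partition-to-$\varepsilon$ translation in both directions, and the same final inclusion yielding the contradiction. The only cosmetic difference is that the paper handles the non-nested $(A_k^\alpha)$ by defining $f_n$ via the \emph{smallest} index $k$ with $n\in A_k^\alpha$ (using $A_k^\alpha\setminus\bigcup_{m<k}A_m^\alpha$), whereas you first pass to cumulative unions and then take a sup; both devices serve the same purpose of ensuring $|f_n(x)|\ge 1/(k+1)$ whenever $n\in A_k^x$.
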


\begin{proof}
This proof is only a slight modification of the proof of \cite[Theorem 5.1]{Marcin-Rafal}.

The implication (b)$\Rightarrow$(c) is obvious. We will prove (a)$\Rightarrow$(b) and (c)$\Rightarrow$(a).

{\bf (a)$\Rightarrow$(b): }Suppose that $|X|<\kappa(\I)$ and $(f_n)\subseteq\mathbb{R}^X$ converges to some $f\in\mathbb{R}^X$. Define
$$A^x_k=\left\{n\in\omega:\ |f_n(x)-f(x)|\geq\frac{1}{k+1}\right\}\in\Fin$$
for each $x\in X$ and $k\in\omega$. Let $(B_n)_{n\in\omega\cup\{-1\}}$ be the partition of $\omega$ which exists by the definition of $\kappa(\I)$ and denote $B=\bigcup_{n\in\omega}B_n$. Define an $\I$-converging to $0$ sequence $(\eps_k)\subseteq(0,1]$ by:
$$\eps_k=\frac{1}{n+1}\ \Longleftrightarrow\ k\in e_B^{-1}[B_n]\in\I.$$

Fix any $x\in X$ and observe that:
$$\left\{k\in\omega:\ |f_{e_B(k)}(x)-f(x)|\geq\eps_k\right\}=\bigcup_{n\in\omega}\left\{k\in e_B^{-1}[B_n]:\ |f_{e_B(k)}(x)-f(x)|\geq\eps_k\right\}=$$
$$=\bigcup_{n\in\omega}\left\{k\in e_B^{-1}[B_n]:\ |f_{e_B(k)}(x)-f(x)|\geq\frac{1}{n+1}\right\}=e_B^{-1}\left[\bigcup_{n\in\omega}A^x_n\cap B_n\right]\in\I.$$

{\bf (c)$\Rightarrow$(a): }Suppose to the contrary that $|X|\geq\kappa(\I)$. Let $\phi\colon\kappa(\I)\to X$ be an injection. Suppose that $\mathcal{A}=\{(A_n^\alpha):\ \alpha<\kappa(\I)\}$ is the family from the definition of $\kappa(\I)$. Define on $X$ real-valued functions $(f_n)$ by:
$$f_n\left(x\right)=\left\{\begin{array}{ll}
\frac{1}{k+1} & \mbox{\boldmath{if }} n\in A^\alpha_k\setminus\bigcup_{m<k}A^\alpha_m\mbox{\boldmath{ and }}x=\phi(\alpha)\mbox{\boldmath{ for some }}\alpha<\kappa(\I),\\
0 & \mbox{\boldmath{otherwise,}}\\
\end{array}\right.$$
for each $n\in\omega$. Then $f_n$ converges to $0$, so by our assumption it has a subsequence $(f_{n_k})$ which $\I$QN-converges to $0$. Let $(\eps_k)\subseteq(0,1)$ be the $\I$-converging to $0$ sequence witnessing it. 

Define: 
$$B=\{n_k:\ k\in\omega\},\quad B_{-1}=\omega\setminus B\quad\text{ and }$$
$$B_n=\left\{n_k:\ \frac{1}{n+2}\leq\eps_k<\frac{1}{n+1}\right\}\quad\text{ for all }\quad n\in\omega$$
(observe that $e_B(k)=n_k$ for each $k\in\omega$). Then $(B_n)_{n\in\omega\cup\{-1\}}$ is a partition of $\omega$ and $e_B^{-1}[B_n]=\{k\in\omega:\ n_k\in B_n\}\in\I$ for all $n\in\omega$ as $(\eps_k)$ is $\I$-converging to $0$. Therefore, there is $\alpha_0<\kappa(\I)$ such that: 
$$e_B^{-1}\left[\bigcup_{n\in\omega}A^{\alpha_0}_n\cap B_n\right]\notin\I.$$

Denote the above set by $C_{\alpha_0}$. We will show that:
$$C_{\alpha_0}\subseteq\left\{k\in\omega:\ |f_{n_k}(\phi(\alpha_0))|\geq\eps_k\right\},$$
which will contradict $f_{n_k}\xrightarrow{\I\text{QN}}0$. Fix $k\in C_{\alpha_0}$. Then there is $i\in\omega$ such that $n_k\in B_i$ and $n_k\in A^{\alpha_0}_i$. Hence, 
$$|f_{n_k}(\phi(\alpha_0))|\geq\frac{1}{i+1}>\eps_k.$$
\end{proof}

\begin{lem}
\label{dd-lem}
$\kappa(\I)\leq\dd$ for every weak P-ideal $\I$ on $\omega$.
\end{lem}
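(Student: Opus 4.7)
The plan is to verify $\kappa(\I) \leq \dd$ by exhibiting an explicit family $\mathcal{A} \subseteq \Fin^\omega$ of size $\dd$ that works in the reformulation given in the remark. Fix a dominating family $\{d_\alpha : \alpha < \dd\} \subseteq \omega^\omega$ and, for each $\alpha < \dd$ and each $n \in \omega$, set $A_n^\alpha = \{0, 1, \ldots, d_\alpha(n)\}$. I claim $\mathcal{A} = \{(A_n^\alpha)_{n \in \omega} : \alpha < \dd\}$ witnesses $\kappa(\I) \leq \dd$.

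Using the equivalent formulation from the remark, fix $B \in [\omega]^\omega$ and a partition $(D_n) \in \mathcal{P}_\I$ of $\omega$. Since $\I$ is a weak P-ideal and each $D_n \in \I$, there is $A \notin \I$ with $A \cap D_n$ finite for every $n \in \omega$. Define $h \in \omega^\omega$ by
\[
h(n) = \begin{cases} \max e_B[A \cap D_n] & \text{if } A \cap D_n \neq \emptyset, \\ 0 & \text{otherwise.} \end{cases}
\]
Choose $\alpha < \dd$ such that $d_\alpha$ dominates $h$, i.e.\ $d_\alpha(n) \geq h(n)$ for all $n \geq N$ for some $N \in \omega$.

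For each $n \geq N$ we then have $e_B[A \cap D_n] \subseteq \{0,1,\ldots,d_\alpha(n)\} = A_n^\alpha$, hence $A \cap D_n \subseteq e_B^{-1}[A_n^\alpha]$. Since $A \cap D_n \subseteq D_n$ as well, this gives $A \cap D_n \subseteq e_B^{-1}[A_n^\alpha] \cap D_n$. Taking the union over $n \geq N$, and using that $A = \bigsqcup_{n \in \omega} (A \cap D_n)$ is a disjoint partition, we conclude
\[
\bigcup_{n \in \omega} e_B^{-1}[A_n^\alpha] \cap D_n \;\supseteq\; \bigcup_{n \geq N}(A \cap D_n) \;=\; A \setminus \bigcup_{n < N}(A \cap D_n),
\]
and the last set is $A$ minus a finite set, hence not in $\I$.

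The argument is essentially routine once the right family is chosen; the only genuinely non-formal step is noticing that the weak P-ideal hypothesis supplies an $A \notin \I$ which is ``finitely supported'' on each $D_n$, so that a dominating function $d_\alpha$ can be used to catch $e_B[A \cap D_n]$ uniformly in $n$. No extra obstacle is expected.
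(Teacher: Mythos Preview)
Your proof is correct and follows essentially the same approach as the paper's own proof: the same witnessing family $A_n^\alpha=\{0,\ldots,d_\alpha(n)\}$ built from a dominating family, the same appeal to the weak P-ideal property to get a selector $A\notin\I$ with $A\cap D_n$ finite, and the same dominating-function argument to capture $e_B[A\cap D_n]$ inside $A_n^\alpha$ for all but finitely many $n$. The only cosmetic difference is that you work with the reformulation of $\kappa(\I)$ from the remark (so your $D_n$ is the paper's $e_B^{-1}[B_n]$), while the paper uses the original definition directly.
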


\begin{proof}
Let $\mathcal{F}=\{f_\alpha\in\omega^\omega:\ \alpha<\dd\}$ be a dominating family. Define finite sets: 
$$A^\alpha_n=\left\{k\in\omega:\ k\leq f_\alpha(n)\right\}$$
for each $\alpha<\dd$ and $n\in\omega$. We claim that the family $\{(A^\alpha_n):\ \alpha<\dd\}$ witnesses $\kappa(\I)\leq\dd$. 

Fix any partition $(B_n)_{n\in\omega\cup\{-1\}}$ of $\omega$ such that $e_{B}^{-1}[B_n]\in\I$ for each $n\in\omega$, where $B$ denotes the set $\bigcup_{n\in\omega}B_n$. Since $\I$ is a weak P-ideal, there is $C\notin\I$ with $C\cap e_{B}^{-1}[B_n]$ finite for all $n\in\omega$. Define a function $g\in\omega^\omega$ by: 
$$g(n)=\max\left(\left(e_B[C]\cap B_n\right)\cup\{0\}\right).$$

Since $\mathcal{F}$ is dominating, there is $\alpha_0<\dd$ with $g\leq^* f_{\alpha_0}$, i.e., $F=\{n\in\omega:\ g(n)>f_{\alpha_0}(n)\}$ is finite. As $(e_{B}^{-1}[B_n])$ is a partition of $\omega$, we have $\I\not\ni C=\bigcup_{n\in\omega}e_{B}^{-1}[B_n]\cap C$. Moreover, $\bigcup_{n\in\omega\setminus F}e_{B}^{-1}[B_n]\cap C\notin\I$ since $\bigcup_{n\in F}e_{B}^{-1}[B_n]\cap C$ is finite. We will show that:
$$\bigcup_{n\in\omega\setminus F}e_{B}^{-1}[B_n]\cap C\subseteq\bigcup_{n\in\omega}e_{B}^{-1}\left[B_n\cap A^{\alpha_0}_n\right].$$ 
Indeed, let $i\in e_{B}^{-1}[B_n]\cap C$ for some $n\in\omega\setminus F$. Then $e_B(i)\leq g(n)\leq f_{\alpha_0}(n)$. Hence, $e_B(i)\in A^{\alpha_0}_n$ and $i\in e_{B}^{-1}\left[B_n\cap A^{\alpha_0}_n\right]$. 
\end{proof}

\begin{lem}
\label{K-KB}
The following are equivalent for any ideal $\I$:
\begin{itemize}
	\item[(a)] $\I$ is a subset of some $\mathtt{F_\sigma}$ ideal;
	\item[(b)] $\I$ is $\leq_{KB}$-below some $\mathtt{F_\sigma}$ ideal;
	\item[(c)] $\I$ is $\leq_{K}$-below some $\mathtt{F_\sigma}$ ideal.	
\end{itemize}
\end{lem}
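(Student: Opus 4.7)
The plan is to prove (a)$\Rightarrow$(b)$\Rightarrow$(c)$\Rightarrow$(a). The first two implications are formal. For (a)$\Rightarrow$(b) I would take the identity function $\mathrm{id}_\omega$, which is trivially finite-to-one; the inclusion $\I\subseteq\J$ ensures $A\in\I\Rightarrow \mathrm{id}_\omega^{-1}[A]=A\in\J$, so $\mathrm{id}_\omega$ witnesses $\I\leq_{KB}\J$. The implication (b)$\Rightarrow$(c) is immediate, since by definition any $\leq_{KB}$-witness is also a $\leq_{K}$-witness.

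The nontrivial direction is (c)$\Rightarrow$(a). Suppose $f\colon\omega\to\omega$ witnesses $\I\leq_K\J$ for some $\mathtt{F_\sigma}$ ideal $\J$. The natural idea is to pull $\J$ back through $f$ and set
$$\J_f=\{B\subseteq\omega:\ f^{-1}[B]\in\J\}.$$
I would then verify that $\J_f$ is an $\mathtt{F_\sigma}$ ideal on $\omega$ containing $\I$. Closure under subsets and finite unions is automatic because $f^{-1}$ commutes with both operations. Properness is immediate from $\omega\notin\J$, as $f^{-1}[\omega]=\omega$. The inclusion $\I\subseteq\J_f$ is just a restatement of the defining property of the Kat\v{e}tov reduction.

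The one point that requires a moment's thought is that $\J_f$ must contain $\Fin$, i.e.\ every singleton. This is where one uses $\Fin\subseteq\I$: for each $n\in\omega$ one has $\{n\}\in\I$, hence $f^{-1}[\{n\}]\in\J$ by the Kat\v{e}tov reduction, so $\{n\}\in\J_f$. In particular, this is where finite-to-one-ness of $f$ is not needed. To see that $\J_f$ is $\mathtt{F_\sigma}$, observe that the pullback map $B\mapsto f^{-1}[B]$ from $2^\omega$ to $2^\omega$ is continuous: for each $n$, the $n$th coordinate of $f^{-1}[B]$ equals $[f(n)\in B]$, which depends continuously on $B$. Thus $\J_f$ is the continuous preimage of the $\mathtt{F_\sigma}$ set $\J$, and so is itself $\mathtt{F_\sigma}$.

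I do not expect any serious obstacle; the whole argument is the standard pullback construction, and the only mild subtlety is the observation above that $\Fin\subseteq\I$ automatically forces $f^{-1}[\{n\}]\in\J$, so $\J_f$ is a genuine ideal even when $f$ fails to be finite-to-one.
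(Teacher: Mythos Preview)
Your proof is correct and follows essentially the same route as the paper: the pullback $\J_f=\{B:f^{-1}[B]\in\J\}$ is exactly the paper's $\bar f^{-1}[\J]$, shown to be an $\mathtt{F_\sigma}$ ideal containing $\I$ via continuity of $B\mapsto f^{-1}[B]$. Your explicit remark that $\Fin\subseteq\I$ forces $f^{-1}[\{n\}]\in\J$ (so $\J_f\supseteq\Fin$ even without $f$ being finite-to-one) is a detail the paper leaves implicit in the phrase ``observe that $\bar f^{-1}[\J]$ is an ideal.''
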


\begin{proof}
The implications (a)$\Rightarrow$(b) and (b)$\Rightarrow$(c) are obvious. To prove (c)$\Rightarrow$(a) suppose that $\I$ is an ideal on a set $M$ which is $\leq_{K}$-below $\mathtt{F_\sigma}$ ideal $\J$ on a set $N$. Let $f\colon N\to M$ be the witnessing function. Then $\bar{f}\colon\mathcal{P}(M)\to\mathcal{P}(N)$ given by $\bar{f}(A)=f^{-1}[A]$ for each $A\in\mathcal{P}(M)$ is continuous. Observe that $\bar{f}^{-1}[\J]$ is an ideal on $M$. Hence, it is an $\mathtt{F_\sigma}$ ideal, since $\J$ is $\mathtt{F_\sigma}$. Finally, $\I\subseteq\bar{f}^{-1}[\J]$, since $f$ witnesses $\I\leq_K\J$.
\end{proof}

\begin{lem}
\label{bb-lem}
$\kappa(\I)\leq\bb$ for every ideal $\I$ on $\omega$ which is $\leq_{K}$-below some $\mathtt{F_\sigma}$ ideal.
\end{lem}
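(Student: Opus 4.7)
The plan is to reduce, via Lemma \ref{K-KB}, to the case $\I\subseteq\J$ for an $\mathtt{F_\sigma}$ ideal $\J$, and then to invoke Mazur's theorem to write $\J=\Fin(\phi)$ for a lower semicontinuous submeasure $\phi$ with $\phi(\omega)=\infty$. The whole purpose of the $\mathtt{F_\sigma}$ hypothesis is to provide the sufficient criterion $\phi(A)=\infty\Rightarrow A\notin\J\Rightarrow A\notin\I$ for certifying membership outside $\I$. I would then take a $\leq^*$-unbounded family $\{f_\alpha:\alpha<\bb\}\subseteq\omega^\omega$ and, mirroring Lemma \ref{dd-lem}, set $A_n^\alpha=\{k\in\omega:k\leq f_\alpha(n)\}$; the claim is that this family witnesses $\kappa(\I)\leq\bb$.

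Given a partition $(B_n)_{n\in\omega\cup\{-1\}}$ with $C_n:=e_B^{-1}[B_n]\in\I$ for every $n\in\omega$, my first step is to regroup the partition. Using lower semicontinuity, pick nested finite sets $F_k\subseteq\omega$ with $\phi(F_k)\to\infty$, and partition the index set $\omega$ into consecutive finite blocks $K_k$ so that each $\tilde{C}_k:=\bigcup_{n\in K_k}C_n$ absorbs a successive $\phi$-increment coming from $F_k$; this yields a coarser partition of $\omega$ into $\I$-sets with $\phi(\tilde{C}_k)\to\infty$. Applying the weak P-ideal property of $\I$ to $(\tilde{C}_k)$ produces $\tilde{C}\notin\I$ with $\tilde{C}\cap\tilde{C}_k$ finite for every $k$, which automatically gives $\tilde{C}\cap C_n$ finite for every $n\in\omega$. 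Enlarge $\tilde{C}$ to $C$ by adjoining, inside each $\tilde{C}_k$, a carefully chosen finite piece of $\phi$-mass comparable to $k$ and spread across the indices in $K_k$. With $g(n):=\max(e_B[C\cap C_n]\cup\{0\})$, unboundedness of $\{f_\alpha\}$ singles out some $\alpha$ with $N_\alpha:=\{n:f_\alpha(n)>g(n)\}$ infinite, and the inclusion $\bigcup_{n\in N_\alpha}(C\cap C_n)\subseteq e_B^{-1}[\bigcup_n A_n^\alpha\cap B_n]$ proceeds verbatim from the proof of Lemma \ref{dd-lem}.

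The main obstacle is the final estimate, namely that the set on the left has $\phi$-measure infinity even though $N_\alpha$ may be coinfinite. Since each $K_k$ is finite, the infinite set $N_\alpha$ necessarily meets infinitely many blocks; the way the $\phi$-heavy finite pieces were spread across $K_k$ in the construction of $C$ is designed precisely so that each such hit contributes a prescribed $\phi$-mass, and lower semicontinuity of $\phi$ then forces $\phi(\bigcup_{n\in N_\alpha}(C\cap C_n))=\infty$. Carrying out this distribution argument while keeping $C\cap C_n$ finite for every $n\in\omega$ is where the technical work of the proof concentrates, and it is the only place where the $\mathtt{F_\sigma}$ structure is essentially used. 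Once the estimate is established, $e_B^{-1}[\bigcup_n A_n^\alpha\cap B_n]$ has infinite $\phi$-measure, hence lies outside $\J$ and in particular outside $\I$, which is the required conclusion.
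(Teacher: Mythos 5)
Your reduction via Lemma \ref{K-KB} to an $\mathtt{F_\sigma}$ ideal $\J=\Fin(\phi)$ and your choice of the witnessing family $A^\alpha_n=\{k:\ k\leq f_\alpha(n)\}$ match the paper, but the core of your argument --- transplanting the weak P-ideal selector $C$ from Lemma \ref{dd-lem} and then certifying $\phi\bigl(\bigcup_{n\in N_\alpha}(C\cap C_n)\bigr)=\infty$ by ``distributing $\phi$-mass over blocks'' --- has a gap that cannot be repaired. The inference ``infinitely many hits, each contributing a prescribed $\phi$-mass, force the union to have infinite $\phi$-mass'' is false for submeasures: $\phi$ is only monotone and subadditive, so the only general lower bound for $\phi$ of a union is the supremum of the masses of the pieces one exhibits inside it, and a hit of $N_\alpha$ at a single index $n\in K_k$ only contributes $C\cap C_n$, whose mass is at most $\phi(C_n)$ --- a quantity you do not control and which may be uniformly bounded. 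Concretely, let $I_k$ be consecutive intervals with $|I_k|=k$, let $\phi(A)=\sup_k|A\cap I_k|$, $\J=\Fin(\phi)$, and let $C_n$ consist of the $n$-th elements of the intervals $I_k$ with $k>n$; then $(C_n)$ is a partition into $\J$-sets with $\phi(C_n)=1$, and for \emph{any} finite sets $D_n\subseteq C_n$ one can disjointify the finite supports $\{k:\ D_n\cap I_k\neq\emptyset\}$ to extract an infinite $N$ with $\phi\bigl(\bigcup_{n\in N}D_n\bigr)\leq 1$. Since unboundedness of $\{f_\alpha\}$ only guarantees that \emph{some} $N_\alpha$ is infinite, with no control over which infinite set it is, no choice of $C$ with all $C\cap C_n$ finite can make your final estimate go through.

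The paper's proof discards the selector $C$ entirely and puts the submeasure to work in the definition of $g$ itself: $g(n)$ is the least $k$ with $\phi\bigl(e_B^{-1}[[0,k]]\bigr)-\phi\bigl(e_B^{-1}[\bigcup_{i<n}B_i]\bigr)\geq n$, which is well defined because $\phi\bigl(e_B^{-1}[[0,k]]\bigr)\to\infty$. Then for each $n$ with $f_{\alpha_0}(n)\geq g(n)$, subadditivity gives $\phi\bigl(e_B^{-1}[\bigcup_{i\geq n}B_i\cap A^{\alpha_0}_n]\bigr)\geq n$, i.e.\ a \emph{single} subset of mass at least $n$; taking the $f_\alpha$ non-decreasing yields $\bigcup_{i\geq n}B_i\cap A^{\alpha_0}_n\subseteq\bigcup_i B_i\cap A^{\alpha_0}_i$, so monotonicity of $\phi$ alone gives $\phi\bigl(\bigcup_n e_B^{-1}[B_n\cap A^{\alpha_0}_n]\bigr)=\infty$. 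The point you are missing is that the mass of size $n$ must be produced by one good index $n$ at once (using the whole tail $\bigcup_{i\geq n}B_i$), rather than accumulated across infinitely many indices, since accumulation is exactly what a submeasure does not permit.
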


\begin{proof}
Observe that $\I\subseteq\J$ implies $\kappa(\I)\leq\kappa(\J)$. Therefore, by Lemma \ref{K-KB}, we only need to show that $\kappa(\J)\leq\bb$ for every $\mathtt{F_\sigma}$ ideal $\J$. 

Let $\mathcal{F}=\{f_\alpha\in\omega^\omega:\ \alpha<\bb\}$ be an unbounded family. Without loss of generality we can assume that each $f_\alpha$ is non-decreasing (we may replace $f_\alpha$ with $\hat{f}_\alpha$ given by $\hat{f}_\alpha(n)=\max_{i\leq n}f(i)$ and observe that the family $\{\hat{f}_\alpha\in\omega^\omega:\ \alpha<\bb\}$ is unbounded as $f_\alpha\leq\hat{f}_\alpha$ for each $\alpha$). Define finite sets: 
$$A^\alpha_n=\left\{k\in\omega:\ k\leq f_\alpha(n)\right\}$$
for each $\alpha<\bb$ and $n\in\omega$. We claim that the family $\{(A^\alpha_n):\ \alpha<\bb\}$ witnesses $\kappa(\J)\leq\bb$. 

Let $\phi$ be the lower semi-continuous submeasure such that $\J=\Fin(\phi)$. Fix any partition $(B_n)_{n\in\omega\cup\{-1\}}$ of $\omega$ such that $e_{B}^{-1}[B_n]\in\J$ for each $n\in\omega$, where $B$ denotes the set $\bigcup_{n\in\omega}B_n$. 

Define a function $g\in\omega^\omega$ by $g(0)=0$ and
$$g(n)=\min\left\{k\in\omega:\ \phi\left(e_B^{-1}\left[[0,k]\right]\right)-\phi\left(e_B^{-1}\left[\bigcup_{i<n}B_i\right]\right)\geq n\right\}$$
for $n>0$. Note that $g$ is well-defined. Indeed, it follows from the facts that $\phi\left(e_B^{-1}\left[\bigcup_{i<n}B_i\right]\right)$ is finite and $\phi\left(e_B^{-1}\left[[0,k]\right]\right)$ tends to infinity as $k\rightarrow\infty$.

Recall that $\mathcal{F}$ is unbounded. Hence, there is $\alpha_0<\bb$ with $g(n)\leq f_{\alpha_0}(n)$ for infinitely many $n$. For each such $n$ we have: 
$$\phi\left(e_{B}^{-1}\left[\bigcup_{i\geq n}B_i\cap A^{\alpha_0}_n\right]\right)\geq
\phi\left(e_{B}^{-1}\left[A^{\alpha_0}_n\right]\right)-\phi\left(e_{B}^{-1}\left[\bigcup_{i<n}B_i\right]\right)\geq $$
$$\geq\phi\left(e_{B}^{-1}\left[[0,g(n)]\right]\right)-\phi\left(e_{B}^{-1}\left[\bigcup_{i<n}B_i\right]\right)\geq n.$$
Now it suffices to observe that:
$$\bigcup_{n\in\omega}e_{B}^{-1}\left[B_n\cap A^{\alpha_0}_n\right]\supseteq \bigcup_{n\in\omega}e_{B}^{-1}\left[\bigcup_{i\geq n}B_i\cap A^{\alpha_0}_n\right]\notin\J.$$
Indeed, if $k\in\bigcup_{n\in\omega}e_{B}^{-1}\left[\bigcup_{i\geq n}B_i\cap A^{\alpha_0}_n\right]$, then there are $n\in\omega$ and $i\geq n$ with $e_B(k)\in B_i\cap A^{\alpha_0}_n$. However, as $f_{\alpha_0}$ is non-decreasing, $A^{\alpha_0}_n\subseteq A^{\alpha_0}_i$. Therefore, $e_B(k)\in B_i\cap A^{\alpha_0}_i$.
\end{proof}

\begin{thm}
\label{main}
We have:
\begin{itemize}
	\item[(a)] $\bb\leq\non(\I\text{QN-space})\leq\non(\I\text{wQN-space})=\kappa(\I)\leq\dd$ for every weak P-ideal $\I$ on $\omega$;
	\item[(b)] $\non(\I\text{QN-space})=\non(\I\text{wQN-space})=\kappa(\I)=\bb$ for every ideal $\I$ on $\omega$ which is $\leq_{K}$-below some $\mathtt{F_\sigma}$ ideal.
\end{itemize}
\end{thm}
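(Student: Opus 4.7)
The plan is to stitch together the lemmas already in place. Part (a) is a chain of four inequalities, of which the outer two have dedicated lemmas, the middle inequality is formal, and the central equality will be squeezed out of Theorem \ref{nonIwQN}. Part (b) will then be immediate from Lemma \ref{bb-lem}.

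For the lower bound $\bb\leq\non(\I\text{QN-space})$ in (a) I will invoke the classical equality $\non(\text{QN-space})=\bb$ of Bukovsk\'y, Rec\l aw and Repick\'y together with the observation that every QN-space is an $\I$QN-space. The latter holds because $\Fin\subseteq\I$: given a sequence of continuous functions converging to zero and a classical witness $(\eps_n)$ of quasi-normal convergence, the same $(\eps_n)$ is $\I$-convergent to $0$ (being already $\Fin$-convergent) and the exceptional sets, being finite, lie in $\I$. The middle inequality $\non(\I\text{QN-space})\leq\non(\I\text{wQN-space})$ follows from the trivial fact that every $\I$QN-space is an $\I$wQN-space (take the whole sequence as the subsequence), so the class of spaces failing $\I$wQN is contained in the class failing $\I$QN. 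For the central equality $\non(\I\text{wQN-space})=\kappa(\I)$ I will apply Theorem \ref{nonIwQN}: implication (a)$\Rightarrow$(b) of that theorem shows that \emph{any} topology on a set of cardinality strictly less than $\kappa(\I)$ yields an $\I$wQN-space, giving $\non(\I\text{wQN-space})\geq\kappa(\I)$; conversely, (c)$\Rightarrow$(a) shows that the discrete space of cardinality $\kappa(\I)$, which is metrizable and hence perfectly normal, is not an $\I$wQN-space, yielding the reverse inequality. Finally, $\kappa(\I)\leq\dd$ is Lemma \ref{dd-lem}.

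Part (b) will then be a one-line consequence: under its hypothesis Lemma \ref{bb-lem} yields $\kappa(\I)\leq\bb$, which combined with (a) forces every term in the chain to equal $\bb$. Since all of the real work already lives in Theorem \ref{nonIwQN} and Lemmas \ref{dd-lem} and \ref{bb-lem}, there is no substantial obstacle here; the only point worth attending to is the use of $\Fin\subseteq\I$ to promote the classical result $\non(\text{QN-space})=\bb$ into the ideal setting, which is what makes the lower bound $\bb$ hold uniformly in $\I$.
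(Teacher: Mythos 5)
Your proposal is correct and follows essentially the same route as the paper: the lower bound from $\non(\text{QN-space})=\bb$ plus the trivial implications, the central equality from Theorem \ref{nonIwQN} applied to the discrete (hence perfectly normal) space, and the outer bounds from Lemmas \ref{dd-lem} and \ref{bb-lem}. The only addition is your explicit justification that $\Fin\subseteq\I$ promotes a quasi-normal witness to an $\I$-quasi-normal one, which the paper takes for granted.
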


\begin{proof}
The equality $\non(\I\text{wQN-space})=\kappa(\I)$ follows immediately from Theorem \ref{nonIwQN} (when showing that $\non(\I\text{wQN-space})\leq\kappa(\I)$, it suffices to endow $X$ with the discrete topology). 

{\bf (a): }The first inequality follows from $\bb=\non(\text{QN-space})$ and the fact that every QN-space is an $\I$QN-space for any ideal $\I$. The second inequality is obvious, as every $\I$QN-space is an $\I$wQN-space. The third one is shown above and the last one is Lemma \ref{dd-lem}.

{\bf (b): }We have 
$$\bb=\non(\text{QN-space})\leq\non(\I\text{QN-space})\leq\non(\I\text{wQN-space})=\kappa(\I).$$
Moreover, $\kappa(\I)\leq\bb$ by Lemma \ref{bb-lem}.
\end{proof}

\begin{cor}
\label{P-ideals}
$\non(\I\text{QN-space})=\non(\I\text{wQN-space})=\bb$ for every analytic P-ideal $\I$ on $\omega$ generated by an unbounded submeasure.
\end{cor}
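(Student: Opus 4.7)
The plan is to observe that this corollary is a direct consequence of Theorem \ref{main}(b), once we verify that every analytic P-ideal generated by an unbounded submeasure sits inside some $\mathtt{F_\sigma}$ ideal. The setup is already in place in Subsection \ref{submeasures}.

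First I would fix a lower semicontinuous submeasure $\phi$ on $\omega$ witnessing that $\I$ is generated by an unbounded submeasure, i.e., $\I=\Exh(\phi)$ with $\phi(\omega)=\infty$. By \cite[Lemma 1.2.2]{Farah}, cited in Subsection \ref{submeasures}, the unboundedness of $\phi$ guarantees that $\Fin(\phi)$ is an $\mathtt{F_\sigma}$ ideal on $\omega$ containing $\Exh(\phi)$. In particular, $\I\subseteq\Fin(\phi)$, so $\I$ is a subset of an $\mathtt{F_\sigma}$ ideal.

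Next I would invoke Lemma \ref{K-KB}, in particular the trivial implication (a)$\Rightarrow$(c), which yields that $\I$ is $\leq_K$-below the $\mathtt{F_\sigma}$ ideal $\Fin(\phi)$. Now Theorem \ref{main}(b) applies and gives
\[
\non(\I\text{QN-space})=\non(\I\text{wQN-space})=\kappa(\I)=\bb,
\]
which is the desired conclusion. (Note that since analytic P-ideals are in particular weak P-ideals, the hypothesis ``weak P-ideal'' implicit in the definitions of $\non(\I\text{QN-space})$ and $\non(\I\text{wQN-space})$ is automatically met.)

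There is no real obstacle here; everything reduces to stringing together the earlier results, and the only ``content'' is recognizing that analytic P-ideals generated by unbounded submeasures are, by construction, contained in $\mathtt{F_\sigma}$ ideals of the form $\Fin(\phi)$.
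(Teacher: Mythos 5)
Your proposal is correct and follows essentially the same route as the paper: note that $\I=\Exh(\phi)\subseteq\Fin(\phi)$, that the unboundedness of $\phi$ makes $\Fin(\phi)$ a proper $\mathtt{F_\sigma}$ ideal, and then apply Theorem \ref{main}(b). No issues.
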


\begin{proof}
Any analytic P-ideal is of the form $\Exh(\phi)$ for some lower semi-continuous submeasure $\phi$. Therefore, it is contained in (so, in particular, $\leq_{K}$-below) $\Fin(\phi)$, which is $\mathtt{F_\sigma}$ (cf. \cite{Farah}). If $\phi$ is unbounded, then $\omega\notin\Fin(\phi)$, so $\Fin(\phi)$ becomes an ideal and we are done.
\end{proof}

\begin{rem}
Note that $\non(\I\text{QN-space})=\non(\I\text{wQN-space})=\bb$ also for some non-analytic ideals. Indeed, let $\I$ be a non-analytic ideal and consider the ideal $\Fin\oplus\I$ on $\{0,1\}\times\omega$ given by:
$$A\in\Fin\oplus\I\ \Longleftrightarrow\ \{n\in\omega:\ (0,n)\in A\}\in\Fin\ \wedge\ \{n\in\omega:\ (1,n)\in A\}\in\I$$
for every $A\subseteq\{0,1\}\times\omega$. Then $\Fin\oplus\I$ is not analytic and non-tall (so, by Theorem \ref{BDS}, a topological space is a QN-space if and only if it is an $(\Fin\oplus\I)$QN-space).
\end{rem}

\begin{cor}
\label{Marcin}
We have the following:
\begin{itemize}
	\item[(a)] $\bb(\I,\I,\Fin)=\bb$ for every ideal $\I$ on $\omega$ which is $\leq_{K}$-below some $\mathtt{F_\sigma}$ ideal. In particular, $\bb(\I,\I,\Fin)=\bb$ for all $\mathtt{F_\sigma}$ ideals and all analytic P-ideals generated by unbounded submeasures.
	\item[(b)] $\bb\leq\bb(\I,\I,\Fin)\leq\dd$ for every weak P-ideal $\I$ on $\omega$.
\end{itemize}
\end{cor}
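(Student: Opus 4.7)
The plan is to deduce both items directly from Theorem \ref{main} by invoking the connection, due to Staniszewski \cite{Staniszewski}, between the cardinal $\bb(\I,\I,\Fin)$ and the uniformities of $\I$QN-spaces. The cleanest reading of the abstract suggests that $\bb(\I,\I,\Fin) = \non(\I\text{QN-space})$ (or, in the weakest usable form, that $\non(\I\text{QN-space}) \leq \bb(\I,\I,\Fin) \leq \non(\I\text{wQN-space})$); granting this, nothing remains but a mechanical transfer of the inequalities in Theorem \ref{main} through this correspondence.

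For part (a), I would first note that every $\mathtt{F_\sigma}$ ideal is trivially $\leq_K$-below an $\mathtt{F_\sigma}$ ideal (namely itself). For analytic P-ideals generated by an unbounded submeasure, I would repeat the observation already used in the proof of Corollary \ref{P-ideals}: if $\I=\Exh(\phi)$ with $\phi$ lower semicontinuous and unbounded, then $\Fin(\phi)$ is an $\mathtt{F_\sigma}$ ideal (using $\omega\notin\Fin(\phi)$) and $\I\subseteq\Fin(\phi)$, which in particular places $\I$ $\leq_K$-below an $\mathtt{F_\sigma}$ ideal. Theorem \ref{main}(b) then gives $\non(\I\text{QN-space})=\non(\I\text{wQN-space})=\bb$, and the Staniszewski identification forces $\bb(\I,\I,\Fin)=\bb$ as claimed.

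For part (b), I would simply appeal to Theorem \ref{main}(a), which reads
\[
\bb \leq \non(\I\text{QN-space}) \leq \non(\I\text{wQN-space}) \leq \dd
\]
for any weak P-ideal $\I$. Sandwiching $\bb(\I,\I,\Fin)$ between these two uniformities via the correspondence immediately yields $\bb \leq \bb(\I,\I,\Fin) \leq \dd$.

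The only non-routine point is the reliance on the identification of $\bb(\I,\I,\Fin)$ with $\non(\I\text{QN-space})$ coming from \cite{Staniszewski}; once that external combinatorial characterization is cited (or, if one prefers a self-contained argument, reproved along the lines of \cite[Corollary 6.5]{Supina} by adapting the construction in Theorem \ref{nonIwQN} to the $\I$QN-setting), both parts are one-line consequences of Theorem \ref{main}. No new combinatorial work beyond what is already in the proofs of Lemmas \ref{dd-lem} and \ref{bb-lem} is required.
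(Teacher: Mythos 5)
Your proof is correct and follows essentially the same route as the paper: the paper's entire argument is the identification $\bb(\I,\I,\Fin)=\non(\I\text{QN-space})$ (via \cite[Section 6]{Supina} and the definition in \cite{Staniszewski}) followed by an appeal to Theorem \ref{main}. Your additional remarks on why analytic P-ideals generated by unbounded submeasures fall under Theorem \ref{main}(b) merely make explicit what the paper already established in Corollary \ref{P-ideals}.
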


\begin{proof}
By \cite[Section 6]{Supina} and the definition of $\bb(\I,\I,\Fin)$ (see \cite{Staniszewski}), we have $\bb(\I,\I,\Fin)=\non(\I\text{QN-space})$.
\end{proof}

\begin{thm}
\label{ex-IwQN}
If $\bb<\bb_\J$ for some ideal $\J$ on $\omega$, then there are a weak P-ideal $\I$ on $\omega$ and an $\I$wQN-space which is not a wQN-space. 
\end{thm}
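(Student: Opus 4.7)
The plan is to reduce the theorem to the combinatorial problem of producing a weak P-ideal $\I$ with $\kappa(\I)>\bb$, to prove this follows from $\bb_\I>\bb$, and finally to build such $\I$ from the given $\J$.

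\textbf{Reduction.} By $\non(\text{wQN-space})=\bb$ (Bukovsk\'y--Rec\l{}aw--Repick\'y) there is a perfectly normal space $X$ with $|X|=\bb$ that is not a wQN-space. Theorem~\ref{main} gives $\non(\I\text{wQN-space})=\kappa(\I)$ for every weak P-ideal $\I$, and Theorem~\ref{nonIwQN}(b) shows that any set $Y$ with $|Y|<\kappa(\I)$ is, with any topology, an $\I$wQN-space. Hence any weak P-ideal $\I$ with $\kappa(\I)>\bb$ automatically makes $X$ an $\I$wQN-space that is not a wQN-space, finishing the proof.

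\textbf{Combinatorial lemma.} I would show that any ideal $\I$ on $\omega$ with $\bb_\I>\bb$ satisfies $\kappa(\I)>\bb$. Given $\mathcal{A}=\{(A^\alpha_n):\alpha<\bb\}\subseteq\Fin^\omega$, set $f_\alpha(n)=\max(A^\alpha_n\cup\{0\})$. Since $|\{f_\alpha\}|=\bb<\bb_\I$, pick $g\in\omega^\omega$ with $\{n:f_\alpha(n)>g(n)\}\in\I$ for every $\alpha$, and replace $g$ by the strictly increasing $h(n)=\max\{g(i):i\leq n\}+n+1$, which satisfies $h(n)>g(n)$. Take the singleton partition $B_n=\{h(n)\}$ for $n\in\omega$ and $B_{-1}=\omega\setminus\{h(n):n\in\omega\}$. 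Then $B=\{h(n):n\in\omega\}$, $e_B=h$, and $e_B^{-1}[B_n]=\{n\}\in\I$, while
\[
e_B^{-1}\!\left[\bigcup_{n\in\omega}A^\alpha_n\cap B_n\right]=\{n:h(n)\in A^\alpha_n\}\subseteq\{n:f_\alpha(n)\geq h(n)\}\subseteq\{n:f_\alpha(n)>g(n)\}\in\I
\]
for every $\alpha<\bb$, because $h(n)\in A^\alpha_n$ forces $g(n)<h(n)\leq f_\alpha(n)$. Thus $\mathcal{A}$ does not witness $\kappa(\I)$ and $\kappa(\I)>\bb$ follows.

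\textbf{Producing $\I$ from $\J$.} The last step is to exhibit a weak P-ideal $\I$ with $\bb_\I>\bb$. If $\J$ is already a weak P-ideal, I would simply take $\I=\J$, so that $\bb_\I=\bb_\J>\bb$ and the combinatorial lemma applies. The general case is the main obstacle: when $\J$ is not a weak P-ideal, one has $\Fin\otimes\Fin\leq_{KB}\J$, and naive modifications such as the Fubini sum $\Fin\oplus\J$ collapse the bounding number back to $\bb$. My plan to bypass this difficulty is to restrict $\J$ to a carefully chosen $\J$-positive set $A$ on which the partition witnessing the failure of the weak P-ideal property of $\J$ degenerates into a partition of $A$ into finite sets, so that the restriction $\J\restriction A$ becomes a weak P-ideal while the inequality $\bb_{\J\restriction A}\geq\bb_\J>\bb$ is preserved. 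Verifying the existence of such $A$ and the preservation of the bounding number is the delicate part of the argument; once it is done, setting $\I=\J\restriction A$ and invoking the combinatorial lemma completes the proof.
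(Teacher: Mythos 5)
Your first two steps are sound: the reduction via $\non(\I\text{wQN-space})=\kappa(\I)$ is exactly how the theorem should be used, and your combinatorial lemma is correct --- in fact your singleton-partition argument with $B_n=\{h(n)\}$ shows the cleaner statement $\kappa(\I)\geq\bb_\I$ for every ideal $\I$. The gap is the third step, which you leave as a plan rather than a proof, and the plan as described does not work. You need a \emph{weak P-ideal} $\I$ with $\bb_\I>\bb$, but the hypothesis only provides an arbitrary ideal $\J$ with $\bb_\J>\bb$, and the known consistent source of such a $\J$ is Canjar's \emph{maximal} ideal with $\bb_\J=\text{cf}(\dd)$. A maximal ideal is a weak P-ideal exactly when its dual ultrafilter is a P-point, and restriction to a $\J$-positive set $A$ preserves both maximality and the failure of P-pointness ($\J\upharpoonright A$ corresponds to an RK-equivalent ultrafilter), so in a model with no P-points your restriction trick cannot turn a maximal $\J$ into a weak P-ideal. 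More fundamentally, it is not clear --- and the paper does not claim --- that the existence of an ideal $\J$ with $\bb_\J>\bb$ implies the existence of a \emph{weak P-ideal} $\I$ with $\bb_\I>\bb$; your whole strategy stands or falls with that implication, which you have not established.

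The paper's route avoids this issue entirely: from $\J$ it builds $\bar{\I}=(\Fin\otimes\Fin)\cap(\J\otimes\emptyset)$ on $\omega\times\omega$, checks directly that $\bar{\I}$ is a weak P-ideal, and then proves $\kappa(\bar{\I})\geq\bb_\J$ by an explicit construction in which the columns $\{n\}\times\omega$ serve as the partition pieces and the $\leq_\J$-boundedness of any family of size $<\bb_\J$ is used to defeat it. Crucially, this $\bar{\I}$ satisfies $\bb_{\bar{\I}}=\bb$ (it is contained in the Borel ideal $\Fin\otimes\Fin$) --- a fact the paper even exploits in the corollary following the theorem --- so your lemma $\bb_\I>\bb\Rightarrow\kappa(\I)>\bb$ is inapplicable to it: the largeness of $\kappa(\bar{\I})$ is inherited from $\bb_\J$ through the $\J\otimes\emptyset$ component, not from the bounding number of $\bar{\I}$ itself. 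To complete your proof you would need to replace the third step by such a direct lower estimate of $\kappa$ for a weak P-ideal manufactured from $\J$.
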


Before proving the above, let us make a short comment.

\begin{rem}
Note that it is consistent with ZFC that $\bb<\bb_\J$ for some ideal $\J$ on $\omega$: in \cite{Canjar} it is proved (in ZFC) that there is a maximal ideal $\J$ with $\bb_\J$ equal to $\text{cf}(\dd)$ (the cofinality of $\dd$) and consistency of $\bb<\text{cf}(\dd)$ follows for instance from \cite[Theorem 2.5]{Blass}. Consistency of $\bb<\bb_\J$ may also be obtained under other set-theoretic assumptions -- see \cite{Mildenberger} for details.
\end{rem}

Theorem \ref{ex-IwQN} follows from the next Lemma as $\non(\text{wQN})=\bb$ by \cite[Corollary 3.2]{BRR}.

\begin{lem}
Let $\J$ be an ideal on $\omega$. Then there is a weak P-ideal $\I$ on $\omega$ such that $\non(\I\text{wQN-space})\geq\bb_\J$. 
\end{lem}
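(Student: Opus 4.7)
By Theorem \ref{main}(a) we have $\non(\I\text{wQN-space}) = \kappa(\I)$ for every weak P-ideal $\I$, so the task reduces to producing a weak P-ideal $\I$ on $\omega$ with $\kappa(\I) \geq \bb_\J$.

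The heart of what I would carry out is the bound $\kappa(\J) \geq \bb_\J$ whenever $\J$ is itself a weak P-ideal, in which case one sets $\I := \J$. Given any family $\mathcal{A} = \{(A^\alpha_n)_{n \in \omega} : \alpha < \lambda\} \subseteq \Fin^\omega$ of size $\lambda < \bb_\J$, define $f_\alpha(n) := \max A^\alpha_n$ (with the convention $\max \emptyset = 0$). Since $\lambda < \bb_\J$, the family $\{f_\alpha : \alpha < \lambda\}$ is $\leq_\J$-bounded, so one can pick $g \in \omega^\omega$ with
\[
T_\alpha := \{n \in \omega : f_\alpha(n) > g(n)\} \in \J \quad \text{for every } \alpha < \lambda.
\]
Next choose an increasing sequence $(b_n)_{n \in \omega}$ with $b_n > g(n)$ at every stage and put
\[
B := \{b_n : n \in \omega\}, \quad B_n := \{b_n\} \text{ for } n \in \omega, \quad B_{-1} := \omega \setminus B.
\]
Each $e_B^{-1}[B_n] = \{n\}$ is finite and hence in $\J$. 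For every $\alpha < \lambda$ the ``defeat set'' is
\[
e_B^{-1}\!\left[\bigcup_{n \in \omega} A^\alpha_n \cap B_n\right] = \{n \in \omega : b_n \in A^\alpha_n\} \subseteq T_\alpha \in \J,
\]
because $b_n \in A^\alpha_n$ forces $b_n \leq f_\alpha(n)$, and combined with $b_n > g(n)$ this yields $f_\alpha(n) > g(n)$. Thus $\mathcal{A}$ cannot witness $\kappa(\J) \leq \lambda$; since $\lambda < \bb_\J$ was arbitrary, $\kappa(\J) \geq \bb_\J$.

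The main obstacle I expect is the case in which $\J$ is not a weak P-ideal: then $\I := \J$ is unavailable, and one also cannot enlarge $\J$ to a weak P-ideal, since a weak P-ideal containing $\J$ would, by restricting the weak P witness through the inclusion, force $\J$ itself to be weak P. My plan for this case would rest on the observation that $\bb \leq \bb_\J$ is strict only under restrictive conditions: the known examples of $\J$ with $\bb < \bb_\J$ (Canjar's P-point construction \cite{Canjar} and its relatives \cite{Mildenberger}) already produce weak P-ideals directly, so the first case applies; while for $\J$ with $\bb_\J = \bb$ the trivial choice $\I := \Fin$ yields $\non(\I\text{wQN-space}) = \non(\text{wQN-space}) = \bb = \bb_\J$.
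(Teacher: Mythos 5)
Your argument in the case when $\J$ is itself a weak P-ideal is correct and is a genuinely direct route: the singleton partition $B_n=\{b_n\}$ with $b_n>g(n)$ does defeat every family of size $<\bb_\J$, so $\kappa(\J)\geq\bb_\J$ and one may take $\I:=\J$. The gap is the remaining case, namely $\J$ not a weak P-ideal with $\bb_\J>\bb$, which you only sketch a ``plan'' for rather than prove. This case cannot be dismissed: the lemma is stated for an \emph{arbitrary} ideal $\J$, and it is precisely the case the intended application requires. A maximal ideal is a weak P-ideal exactly when its dual ultrafilter is a P-point, and Canjar's construction is a ZFC theorem while P-points consistently do not exist; so one cannot assume that the relevant maximal ideal $\J$ with $\bb_\J=\mathrm{cf}(\dd)>\bb$ is a weak P-ideal. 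Your (correct) observation that no weak P-ideal can contain such a $\J$ shows that enlargement is hopeless, but it does not show the case is vacuous.

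The paper closes this gap by manufacturing a new weak P-ideal on $\omega\times\omega$ from an arbitrary $\J$, namely $\bar{\I}=(\Fin\otimes\Fin)\cap(\J\otimes\emptyset)$: the $\Fin\otimes\Fin$ component forces the weak P property (via a diagonal choice of one point per column), while the $\J\otimes\emptyset$ component is what transfers the lower bound $\kappa(\bar{\I})\geq\bb_\J$. Note that your general inequality $\kappa(\I)\geq\bb_\I$ cannot be used as a black box here: since $\bar{\I}\subseteq\Fin\otimes\Fin$, one has $\bb_{\bar{\I}}\leq\bb_{\Fin\otimes\Fin}=\bb$, so applying your lemma to $\bar{\I}$ only yields $\kappa(\bar{\I})\geq\bb$. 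One has to exploit the column structure of $\bar{\I}$ directly --- encode each $(A^\alpha_n)$ as the function $f_\alpha(n)=\max\{k:(n,k)\in\phi[A^\alpha_n]\}$, bound these modulo $\J$ by some $g$, and build the partition out of the tails of the columns above $g$ --- which is exactly the paper's argument and is a genuine extra idea your proposal is missing.
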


\begin{proof}
Define an ideal $\bar{\I}$ on $\omega\times\omega$ by:
$$A\in\bar{\I}\ \Longleftrightarrow\ \{n\in\omega:\ |A\cap(\{n\}\times\omega)|=\omega\}\in\Fin\ \wedge\ \{n\in\omega:\ A\cap(\{n\}\times\omega)\neq\emptyset\}\in\J$$
for each $A\subseteq\omega\times\omega$ (i.e., $\bar{\I}=(\Fin\otimes\Fin)\cap(\J\otimes\emptyset)$). Note that $\bar{\I}$ is an ideal as an intersection of two ideals.

We need to show two facts:
\begin{itemize}
	\item[(i)] $\bar{\I}$ is a weak P-ideal;
	\item[(ii)] $\non(\I\text{wQN-space})\geq\bb_\J$ for every ideal $\I$ on $\omega$ isomorphic to $\bar{\I}$.
\end{itemize}
Then any ideal on $\omega$ isomorphic to $\bar{\I}$ will be as needed (since being a weak P-ideal is invariant over isomorphisms of ideals).

{\bf (i) $\bar{\I}$ is a weak P-ideal: }Fix a partition $(X_n)$ of $\omega\times\omega$ into sets belonging to $\bar{\I}$. Define by induction two sequences $(m_n),(k_n)\subseteq\omega$ such that for each $n\in\omega$ we have $(n,m_n)\in X_{k_n}$ and
$$m_n=\left\{\begin{array}{ll}
\min\{m\in\omega:(n,m)\notin\bigcup\{X_{k_i}:i<n\}\} & \mbox{\boldmath{if }}\{n\}\times\omega\not\subseteq\bigcup_{i<n}X_{k_i},\\
\min\{m\in\omega:(n,m)\in\bigcup\{X_{k}:|X_k\cap(\{n\}\times\omega)|=\omega\}\} & \mbox{\boldmath{otherwise.}}\\
\end{array}\right.$$
Then $Y=\{(n,m_n):\ n\in\omega\}\notin\I$ as $\{n\in\omega:\ Y\cap(\{n\}\times\omega)\neq\emptyset\}=\omega\notin\J$. Moreover, $Y\cap X_n$ is finite for all $n$ (otherwise we would have $|X_n\cap(\{k\}\times\omega)|=\omega$ for infinitely many $k\in\omega$).

{\bf (ii) $\non(\I\text{wQN-space})\geq\bb_\J$: }Fix any bijection $\phi\colon\omega\to\omega\times\omega$ and denote $X_n=\phi^{-1}[\{n\}\times\omega]$ for all $n\in\omega$. We will show that $\bb_\J\leq\non(\I_\phi\text{wQN-space})$ where $\I_\phi=\{\phi^{-1}[A]:\ A\in\bar{\I}\}$ is an ideal on $\omega$ isomorphic to $\bar{\I}$. 

We will use the equality $\non(\I\text{wQN-space})=\kappa(\I)$ from Theorem \ref{main}. Let $\kappa<\bb_\J$ and $\{(A^\alpha_n):\ \alpha<\kappa\}\subseteq\Fin^\omega$. Define:
$$f_\alpha(n)=\max\left\{k\in\omega:\ (n,k)\in\phi[A^\alpha_n]\right\}$$
for all $\alpha<\kappa$ and $n\in\omega$. Then there is $g\in\omega^\omega$ such that $\{n\in\omega:\ f_\alpha(n)>g(n)\}\in\J$ for each $\alpha<\kappa$.  

Now we proceed to the construction of a partition $(B_n)_{n\in\omega\cup\{-1\}}$. Define:
$$C_{-1}=\{(i,j):\ j\leq g(i)\};$$
$$C_n=\{(n,j):\ j>g(n)\}$$
for each $n\in\omega$. Observe that $\phi^{-1}[C_n]\notin\Fin$ and $\phi^{-1}[C_n]\subseteq X_n$ for each $n$. Define a sequence $(m_k)\in\omega^\omega$ by $k\in X_{m_k}$. Pick inductively a sequence $(n_k)\subseteq\omega$ such that:
$$n_0=\min\left\{n\in\omega:\ n\in X_{m_0}\cap \phi^{-1}[C_{m_0}]\right\};$$
$$n_k=\min\left\{n>n_{k-1}:\ n\in X_{m_k}\cap \phi^{-1}[C_{m_k}]\right\}.$$
Denote:
$$B=\{n_k:\ k\in\omega\},\quad B_{-1}=\omega\setminus B\quad\text{ and }\quad B_n=B\cap X_n\text{ for each }n\in\omega.$$

Notice that $e_B(k)=n_k$ and for each $n\in\omega$ we have:
\begin{itemize}
	\item[(a)] $B_n\subseteq X_n$;
	\item[(b)] $B_n\subseteq \phi^{-1}[C_n]$;
	\item[(c)] $n_k\in B_{m_k}$.
\end{itemize}
Moreover, by item (c), we have: 
$$k\in e_B^{-1}[B_n]\ \Longleftrightarrow\ n_k\in B_n\ \Longleftrightarrow\ n=m_k\ \Longleftrightarrow\ k\in X_n,$$
which establishes:
\begin{itemize}
	\item[(d)] $e_B^{-1}[B_n]=X_n\in\I_\phi$.
\end{itemize}

By item (d), the partition $(B_n)_{n\in\omega\cup\{-1\}}$ will be as needed, provided that we will show $\bigcup_{k\in\omega}e_B^{-1}[A^\alpha_k\cap B_k]\in\I_\phi$ for each $\alpha<\kappa$. 

Fix any $\alpha<\kappa$. Then $Y_\alpha=\{n\in\omega:\ f_\alpha(n)>g(n)\}\in\J$. Let $n\in\omega$.

If $n\notin Y_\alpha$, we have:
$$\phi[A^\alpha_n]\cap C_n=\emptyset\ \Longrightarrow\ A^\alpha_n\cap\phi^{-1}[C_n]=\emptyset\ \Longrightarrow\ A^\alpha_n\cap B_n=\emptyset\ \Longrightarrow\ e_B^{-1}[A^\alpha_n\cap B_n]=\emptyset.$$
Indeed, the second implication follows from condition (b) and the remaining two are trivial. By item (d) and the fact that $(X_n)$ is a partition of $\omega$, we get that: 
$$\left(\bigcup_{k\in\omega}e_B^{-1}[A^\alpha_k\cap B_k]\right)\cap X_n=\left(\bigcup_{k\in\omega}e_B^{-1}[A^\alpha_k]\cap X_k\right)\cap X_n=$$
$$=e_B^{-1}[A^\alpha_n]\cap X_n=e_B^{-1}[A^\alpha_n\cap B_n]=\emptyset.$$

On the other hand, if $n\in Y_\alpha$, then we have $e_B^{-1}[A^\alpha_n\cap B_n]\in\Fin$ as $A^\alpha_n\in\Fin$. Therefore, again by item (d) and the fact that $(X_n)$ is a partition of $\omega$ we get that $\left(\bigcup_{k\in\omega}e_B^{-1}[A^\alpha_k\cap B_k]\right)\cap X_n\in\Fin$.

Hence, $\bigcup_{k\in\omega}e_B^{-1}[A^\alpha_k\cap B_k]\in\I_\phi$.
\end{proof}

Let $\I$ be an ideal on $\omega$. A sequence $(U_n)$ of subsets of a topological space $X$ is an \emph{$\I\text{-}\gamma$-cover} if $U_n\neq X$ for all $n\in\omega$ and $\{n\in\omega:\ x\notin U_n\}\in\I$ for all $x\in X$. By $\I\text{-}\Gamma$ we denote the family of all open $\I\text{-}\gamma$-covers. We write $\Gamma$ instead of $\Fin\text{-}\Gamma$. Moreover, $X$ is $S_1(\Gamma, \I\text{-}\Gamma)$ whenever for every sequence $(\mathcal{U}_n)\subseteq\Gamma$ one can find $U_n\in\mathcal{U}_n$, for $n\in\omega$, with $(U_n)\in\I\text{-}\Gamma$.

The \emph{Scheepers Conjecture} asserts that a space is a wQN-space if and only if it satisfies $S_1(\Gamma,\Gamma)$ (cf. \cite{Scheepers}). It is still open whether the Scheepers Conjecture is provable, however Dow showed that it is consistently true (cf. \cite{Dow}).

\v{S}upina proved in \cite[Corollary 1.7]{Supina} that the ideal version of Scheepers Conjecture does not hold if $\I$ is not a weak P-ideal as in this case one can find a perfectly normal $\I$wQN-space which is not $S_1(\Gamma, \I\text{-}\Gamma)$. However, by Theorem \ref{Supina2}, if $\I$ is not a weak P-ideal, then any topological space is an $\I$wQN-space, so the above result is not rewarding.

The following result shows that the ideal version of Scheepers Conjecture for weak P-ideals consistently does not hold.

\begin{cor}
If $\bb<\bb_\J$ for some ideal $\J$ on $\omega$, then there are a weak P-ideal $\I$ on $\omega$ and an $\I$wQN-space which is not $S_1(\Gamma, \I\text{-}\Gamma)$. 
\end{cor}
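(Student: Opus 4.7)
The plan is to reuse the weak P-ideal $\I$ produced in Theorem~\ref{ex-IwQN}, namely (an isomorphic copy on $\omega$ of) $\bar{\I}=(\Fin\otimes\Fin)\cap(\J\otimes\emptyset)$ via a bijection $\phi\colon\omega\to\omega\times\omega$. From the proof of Theorem~\ref{ex-IwQN} we have $\kappa(\I)=\non(\I\text{wQN-space})\geq\bb_\J>\bb$. I will take $X$ to be the discrete space of cardinality $\bb$. Since $|X|=\bb<\kappa(\I)$, Theorem~\ref{nonIwQN} immediately gives that $X$ is an $\I$wQN-space, so the whole task is to check that $X$ is not $S_1(\Gamma,\I\text{-}\Gamma)$.

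To that end, identify $X$ with the ordinal $\bb$ and fix a $\leq^*$-unbounded family $\{h_\alpha:\alpha<\bb\}\subseteq\omega^\omega$; by enlarging the family with the countable set $\{m\mapsto n+m:n\in\omega\}$ we may assume that $\{h_\alpha(m):\alpha<\bb\}$ is unbounded in $\omega$ for every $m$. For $(n,m)\in\omega\times\omega$ set
\[
\mathcal{U}_{(n,m)}=\{U^{(n,m)}_k:k\in\omega\},\qquad U^{(n,m)}_k=\{\alpha<\bb:h_\alpha(m)\leq k\}.
\]
Each $U^{(n,m)}_k$ is open in the discrete topology and is different from $X$ by unboundedness, and since $\alpha\in U^{(n,m)}_k$ for every $k\geq h_\alpha(m)$, each $\mathcal{U}_{(n,m)}$ is an open $\gamma$-cover. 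Via $\phi$ these form an $\omega$-indexed sequence of open $\gamma$-covers of $X$.

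Assume for contradiction that some selection $V_j=U^{\phi(j)}_{k_j}\in\mathcal{U}_{\phi(j)}$ forms an $\I\text{-}\gamma$-cover and set $G(n,m)=k_{\phi^{-1}(n,m)}$. Then for every $\alpha<\bb$,
\[
A_\alpha:=\{(n,m)\in\omega\times\omega:h_\alpha(m)>G(n,m)\}\in\bar{\I}.
\]
Because $\bar{\I}\subseteq\Fin\otimes\Fin$, the vertical section $\{m:h_\alpha(m)>G(n,m)\}$ is finite for all but finitely many $n$; in particular there is $n_\alpha$ with $h_\alpha\leq^* G(n_\alpha,\cdot)$. Now the single function $g(k):=1+\max_{n\leq k}G(n,k)$ dominates $G(n,\cdot)$ modulo finite for every $n$, so $h_\alpha\leq^* g$ for every $\alpha<\bb$, contradicting the $\leq^*$-unboundedness of $\{h_\alpha:\alpha<\bb\}$.

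The main obstacle is the combinatorial heart of the argument: producing, from a $\leq^*$-unbounded family of size $\bb$, a sequence of $\gamma$-covers that defeats every selector into $\I\text{-}\Gamma$. The inclusion $\bar{\I}\subseteq\Fin\otimes\Fin$ is exactly what makes this work, as it forces all but finitely many vertical sections of the "bad" set $A_\alpha$ to be finite and thereby reduces the statement to the classical fact that countably many functions in $\omega^\omega$ are $\leq^*$-bounded. Minor technical points --- the requirement $U^{(n,m)}_k\neq X$ in the definition of a $\gamma$-cover and the translation between $\omega$- and $(\omega\times\omega)$-indexing via $\phi$ --- are handled respectively by the auxiliary enlargement of the family and by transporting $\I$ to $\bar{\I}$.
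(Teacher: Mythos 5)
Your proof is correct, and it reaches the conclusion by a genuinely more self-contained route than the paper. The paper's argument is essentially a chain of citations: it invokes \v{S}upina's equality ${\non}(S_1(\Gamma,\hat{\I}\text{-}\Gamma)\text{-space})=\bb_{\hat{\I}}$ from \cite[Corollary 7.4(ii)]{Supina}, then shows $\bb_\I=\bb$ for the ideal $\I$ of Theorem \ref{ex-IwQN} by monotonicity (since $\I\subseteq\Fin\otimes\Fin$ up to the identification by $\phi$) together with the fact, imported from \cite{Farkas-Soukup} and \cite{Farah}, that $\bb_{\hat{\I}}=\bb$ for Borel ideals; combined with $\bb<\bb_\J\leq\kappa(\I)={\non}(\I\text{wQN-space})$ this produces the desired space. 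You instead fix the discrete space of size $\bb$ (an $\I$wQN-space by Theorem \ref{nonIwQN}, exactly as in the paper) and exhibit concrete open $\gamma$-covers defeating every selector, which amounts to reproving by hand the one inequality ${\non}(S_1(\Gamma,\I\text{-}\Gamma)\text{-space})\leq\bb$ that is actually needed. The combinatorial core is the same in both treatments: your diagonal bound $g(k)=1+\max_{n\leq k}G(n,k)$ is precisely a proof that $\bb_{\Fin\otimes\Fin}=\bb$, which is the fact the paper obtains via the literature, and your use of only the $\Fin\otimes\Fin$ half of $\bar{\I}=(\Fin\otimes\Fin)\cap(\J\otimes\emptyset)$ mirrors the paper's monotonicity step $\bb_\I\leq\bb_{\Fin\otimes\Fin}$. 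What your version buys is independence from the external results on $S_1$-selection principles and on $\bb_{\hat{\I}}$ for Borel ideals; what it costs is the $\gamma$-cover bookkeeping (the auxiliary functions $m\mapsto n+m$ ensuring $U^{(n,m)}_k\neq X$, and the transport of the indexing and of the ideal along $\phi$), all of which you handle correctly.
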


\begin{proof}
By \cite[Corollary 7.4(ii)]{Supina}, ${\non}(S_1(\Gamma, \hat{\I}\text{-}\Gamma)\text{-space})=\bb_{\hat{\I}}$ for every ideal $\hat{\I}$. Let $\I$ be the ideal from Theorem \ref{ex-IwQN}. We will show that $\bb_\I=\bb$. Observe that if $\I_1\subseteq\I_2$, then $\bb_{\I_1}\leq\bb_{\I_2}$. Moreover, $\I\subseteq\Fin\otimes\Fin$. Hence, it suffices to show that $\bb_{\Fin\otimes\Fin}=\bb$. This follows from the fact that $\Fin\otimes\Fin$ is a Borel (in fact $\mathtt{F_{\sigma\delta\sigma}}$) ideal. Indeed, by the proof of \cite[Corollary 5.5]{Farkas-Soukup}, we have $\bb_{\hat{\I}}=\bb$ for any ideal $\hat{\I}$ which is $\leq_{RB}$-above $\Fin$ and this is the case for every Borel ideal by \cite[Corollary 3.10.2]{Farah}.
\end{proof}

\section{Relation between $\I$wQN-spaces and wQN-spaces}

In \cite[Problem 3.2]{BDS} authors ask about existence of a tall ideal $\I$ such that for any sequence of functions $\I$QN-converging to $0$ one can find its subsequence converging quasi-normally to $0$. In this section we investigate this property.

Let $\I$ be a tall ideal on $\omega$. Define:
$$\cov(\I)=\min\left\{|\mathcal{A}|:\ \mathcal{A}\subseteq\I\ \wedge\ \forall_{\textit{S}\in[\omega]^\omega}\ \exists_{\textit{A}\in\mathcal{A}}\ |\textit{A}\cap \textit{S}|=\omega\right\}.$$
This cardinal invariant was considered e.g. in \cite{Brendle-Shelah} (where a different notation is used) and \cite{Hrusak2}. It is a variation of the pseudointersection number $\pp$ -- we additionally require that the witnessing family is from the filter dual to the ideal $\I$.

\begin{rem}
\label{p+c}
We have $\pp\leq\cov(\I)\leq\cc$ for every tall ideal $\I$ (cf. \cite{Hrusak}).
\end{rem}

There are examples of tall ideals $\I$ with non-trivial values of $\cov(\I)$, for instance:
\begin{itemize}
	\item $\cov(\Fin\otimes\Fin)=\bb$ (cf. \cite{Hrusak});
	\item $\cov({\tt nwd})={\tt cov}(\mathcal{M})$, where {\tt nwd} is the ideal on $\mathbb{Q}\cap [0,1]$ consisting of all nowhere dense subsets of $\mathbb{Q}\cap [0,1]$ (cf. \cite{Hrusak} or \cite{Keremedis});
	\item $\cov(\mathcal{ED})={\tt non}(\mathcal{M})$, where $\mathcal{ED}$ is the ideal on $\omega\times\omega$ generated by all vertical lines (i.e., sets $\{n\}\times\omega$ for $n\in\omega$) and graphs of functions from $\omega$ to $\omega$ (cf. \cite{Hrusak} or \cite{Hrusak3});
	\item $\cov({\tt conv})=\cc$, where ${\tt conv}$ is the ideal on $\mathbb{Q}\cap [0,1]$ generated by sequences in $\mathbb{Q}\cap [0,1]$ convergent in $[0,1]$ (cf. \cite{Hrusak} or \cite{Meza}).
\end{itemize}
For more examples see \cite{Hrusak}.

\begin{thm}
\label{thm-cov}
Let $\I$ be a tall ideal on $\omega$. The following are equivalent for any set $X$:
\begin{itemize}
	\item[(a)] $|X|<\cov(\I)$;
	\item[(b)] for any sequence of real-valued functions defined on $X$, if it $\I$QN-converges to some $f\in\mathbb{R}^X$, then one can find its subsequence QN-converging to $f$.
\end{itemize}
\end{thm}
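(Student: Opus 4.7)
The plan is to prove both directions directly, following the same template as Theorem \ref{nonIwQN}.

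For (a)$\Rightarrow$(b), suppose $|X|<\cov(\I)$ and let $(f_n)\subseteq\R^X$ $\I$QN-converge to $f$, witnessed by $(\eps_n)\subseteq(0,1)$ with $\eps_n\xrightarrow{\I}0$. For each $x\in X$ set $A_x=\{n\in\omega:\ |f_n(x)-f(x)|\geq\eps_n\}\in\I$ and for each $k\in\omega$ set $D_k=\{n\in\omega:\ \eps_n\geq 1/(k+1)\}\in\I$ (the latter being in $\I$ precisely because $\eps_n\xrightarrow{\I}0$). The family $\mathcal{F}=\{A_x:\ x\in X\}\cup\{D_k:\ k\in\omega\}\subseteq\I$ has cardinality at most $|X|+\aleph_0<\cov(\I)$, using Remark \ref{p+c} together with $\pp\geq\aleph_1$ to handle the case when $X$ is countable. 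Thus $\mathcal{F}$ cannot witness $\cov(\I)$, so there is an infinite $B\subseteq\omega$ with $A\cap B$ finite for every $A\in\mathcal{F}$.

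Define $\delta_k=\eps_{e_B(k)}$. The finiteness of each $D_k\cap B$ gives $\delta_k\to 0$ in the usual sense, and for every $x\in X$
$$\{k\in\omega:\ |f_{e_B(k)}(x)-f(x)|\geq\delta_k\}=e_B^{-1}[A_x]$$
is finite because $A_x\cap B$ is finite. Hence $(f_{e_B(k)})$ QN-converges to $f$, as required.

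For the converse, assume $|X|\geq\cov(\I)$ and let $\{A_\alpha:\ \alpha<\cov(\I)\}\subseteq\I$ be a family realizing $\cov(\I)$. Fix an injection $\phi\colon\cov(\I)\to X$ and define $(f_n)\subseteq\R^X$ by $f_n(x)=1$ if $x=\phi(\alpha)$ and $n\in A_\alpha$ for some $\alpha$, and $f_n(x)=0$ otherwise. Using the witness $\eps_n=1/(n+2)$ one has $\{n:\ |f_n(x)|\geq\eps_n\}$ equal to $A_\alpha$ when $x=\phi(\alpha)$ and empty otherwise, so $(f_n)$ $\I$QN-converges to $0$. If some $(f_{e_B(k)})$ QN-converged to $0$ with witness $(\delta_k)$, then for every $\alpha$ the set $\{k:\ e_B(k)\in A_\alpha\text{ and }\delta_k\leq 1\}$ would be finite; since $\delta_k\to 0$, this forces $B\cap A_\alpha$ to be finite for every $\alpha<\cov(\I)$, contradicting the defining property of the family $\{A_\alpha\}$ applied to the infinite set $B$.

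The argument is essentially straightforward; the only point that requires care is ensuring that the $\delta_k$ extracted from the $\I$-null sequence $(\eps_n)$ really converges to $0$ in the classical sense, which is precisely why the auxiliary sets $D_k$ must be added to $\mathcal{F}$ alongside the $A_x$'s.
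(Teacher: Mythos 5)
Your proof is correct and follows essentially the same argument as the paper: in (a)$\Rightarrow$(b) you add countably many auxiliary sets in $\I$ to the family $\{A_x\}$ to force the extracted $\delta_k$ to converge classically (the paper uses the slices $\{n:\ 1/(k+2)\leq\eps_n<1/(k+1)\}$ where you use the tails $D_k$, an immaterial difference), and the (b)$\Rightarrow$(a) construction from a family realizing $\cov(\I)$ is identical.
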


\begin{proof}
{\bf (a)$\Rightarrow$(b): }Suppose that $|X|<\cov(\I)$ and fix $(f_n)\subseteq\mathbb{R}^X$ which $\I$QN-converges to some $f\in\mathbb{R}^X$ with the witnessing sequence $(\eps_n)\subseteq (0,1)$. 

For each $x\in X$ let: 
$$B_x=\left\{n\in\omega:\ |f_n(x)-f(x)|\geq\eps_n\right\}\in\I.$$
Let also: 
$$A_k=\left\{n\in\omega:\ \frac{1}{k+2}\leq\eps_n<\frac{1}{k+1}\right\}\in\I$$
for all $k\in\omega$.

Observe that $|X|+\omega<\cov(\I)$ as $\cov(\I)\geq\pp>\omega$ (cf. Remark \ref{p+c}). Hence, there is an infinite set $S\subseteq\omega$ which has finite intersections with all $B_x$, $x\in X$, as well as with all $A_k$, $k\in\omega$. Let $(n_m)$ be an increasing enumeration of $S$. 

We will show that $(f_{n_m})$ QN-converges to $f$. Define $\eps'_m=\eps_{n_m}$ for all $m\in\omega$. Then $\eps'_m$ converges to $0$ as $S\cap A_k$ is finite for each $k\in\omega$. Moreover, we have
$$|\left\{m\in\omega:\ |f_{n_m}(x)-f(x)|\geq\eps'_m\right\}|=|S\cap B_x|<\omega$$
for all $x\in X$.

{\bf (b)$\Rightarrow$(a): }Suppose that $|X|\geq\cov(\I)$. Let $\phi\colon\cov(\I)\to X$ be an injection. Let also $\{A_\alpha:\ \alpha<\cov(\I)\}$ be such a family of members of $\I$ that for each infinite $S\subseteq\omega$ there is $\alpha<\cov(\I)$ with $S\cap A_\alpha$ infinite. 

Define a sequence $(f_n)\subseteq\mathbb{R}^X$ by: 
$$f_n\left(x\right)=\left\{\begin{array}{ll}
1 & \mbox{\boldmath{if }} n\in A_\alpha\mbox{\boldmath{ and }}x=\phi(\alpha)\mbox{\boldmath{ for some }}\alpha<\cov(\I),\\
0 & \mbox{\boldmath{otherwise.}}\\
\end{array}\right.$$
Let also $f\in\mathbb{R}^X$ be the function constantly equal to $0$.

It is easy to see that for any $(\eps_n)\subseteq(0,1)$ converging to $0$ we have: 
$$\left\{n\in\omega:\ |f_{n}(x)-f(x)|\geq\eps_n\right\}=\emptyset$$
for each $x\in X\setminus\phi[\cov(\I)]$ and

$$\left\{n\in\omega:\ |f_{n}(x)-f(x)|\geq\eps_n\right\}=A_\alpha\in\I$$
for each $x=\phi(\alpha)\in\phi[\cov(\I)]$. Hence, $(f_n)$ is $\I$QN-convergent to $f$. 

Now we will show that none of subsequences of $(f_n)$ converges to $f$. Fix any subsequence $(f_{n_m})\subseteq(f_n)$. The set $S=\left\{n_m:\ m\in\omega\right\}$ is infinite, so there is $\alpha_0<\cov(\I)$ with $S\cap A_{\alpha_0}$ infinite. Then we have: 
$$|\left\{m\in\omega:\ |f_{n_m}(\phi(\alpha_0))-f(\phi(\alpha_0))|\geq 1\right\}|=|S\cap A_{\alpha_0}|=\omega.$$
Therefore, $(f_{n_m})$ cannot QN-converge to $f$.
\end{proof}

As a consequence of the above Theorem we obtain the main result of this Section.

\begin{cor}
Let $\I$ be a tall ideal on $\omega$ and $X$ be a topological space of cardinality less than $\cov(\I)$. Then $X$ is an $\I$wQN-space if and only if it is a wQN-space.
\end{cor}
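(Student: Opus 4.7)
The corollary is essentially a direct composition of Theorem \ref{thm-cov} with the definition of an $\I$wQN-space, so the proof plan is quite short. There are two implications to check.

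The forward direction (wQN-space $\Rightarrow$ $\I$wQN-space) is immediate and requires no hypothesis on $|X|$. Since $\Fin\subseteq\I$ for every ideal $\I$ on $\omega$, any sequence $(f_n)$ that QN-converges to $0$ automatically $\I$QN-converges to $0$ (the same witnessing sequence $(\eps_n)$ works because every finite set lies in $\I$). Thus, given a sequence $(f_n)\subseteq\mathbb{R}^X$ of continuous functions converging to $0$, a wQN-subsequence that QN-converges to $0$ is already an $\I$QN-witnessing subsequence.

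For the backward direction ($\I$wQN-space $\Rightarrow$ wQN-space), I would take an arbitrary sequence $(f_n)\subseteq\mathbb{R}^X$ of continuous functions on $X$ converging pointwise to $0$. By the $\I$wQN-property, there exists an infinite $B\subseteq\omega$ such that $(f_{e_B(n)})$ $\I$QN-converges to $0$. Now I invoke Theorem \ref{thm-cov}, applied to the set $X$ with $|X|<\cov(\I)$ (note that Theorem \ref{thm-cov} is stated for arbitrary sets, not just topological spaces, and does not require continuity): the implication (a)$\Rightarrow$(b) yields a subsequence of $(f_{e_B(n)})$ that QN-converges to $0$. A subsequence of a subsequence is itself a subsequence of the original $(f_n)$ indexed by some infinite $B'\subseteq\omega$, which witnesses that $X$ is a wQN-space.

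There is no real obstacle here: the result follows by a two-step extraction, first using the ideal property to pass to an $\I$QN-convergent subsequence, and then using the cardinality hypothesis via Theorem \ref{thm-cov} to refine further into a QN-convergent subsequence. The whole content of the corollary is packed into Theorem \ref{thm-cov}, so the proof is a clean one-line composition in each direction, and the hypothesis $|X|<\cov(\I)$ enters only through the application of Theorem \ref{thm-cov}.
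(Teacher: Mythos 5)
Your proof is correct and follows exactly the paper's argument: the forward direction is the trivial observation that QN-convergence implies $\I$QN-convergence, and the backward direction extracts an $\I$QN-convergent subsequence from the $\I$wQN-property and then refines it to a QN-convergent subsequence via the implication (a)$\Rightarrow$(b) of Theorem \ref{thm-cov}. Your write-up merely spells out the two-step extraction that the paper compresses into ``an immediate consequence of the previous Theorem.''
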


\begin{proof}
Any wQN-space is an $\I$wQN-space and the other implication is an immediate consequence of the previous Theorem.
\end{proof}

\begin{cor}
We have the following:
\begin{itemize}
	\item[(a)] If $|X|<\pp$, then for any ideal $\I$ on $\omega$ and any sequence of real-valued functions defined on $X$, which $\I$QN-converges to some $f\in\mathbb{R}^X$, one can find its subsequence QN-converging to $f$.
	\item[(b)] If $|X|\geq\cc$, then for any tall ideal $\I$ on $\omega$ there is a sequence of real-valued functions defined on $X$, which $\I$QN-converges to some $f\in\mathbb{R}^X$, but none of its subsequences QN-converges to $f$.
\end{itemize}
\end{cor}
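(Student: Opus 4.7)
The plan is to derive both parts as direct consequences of Theorem \ref{thm-cov}, after splitting (a) according to whether $\I$ is tall or not and, in (b), using the general bound from Remark \ref{p+c}.

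For part (a), I would first dispose of the case where $\I$ is not tall. By definition there is an infinite $B\subseteq\omega$ such that $\I\cap\mathcal{P}(B)\subseteq\Fin$. Given $(f_n)\subseteq\mathbb{R}^X$ $\I$QN-converging to $f$ with witnessing sequence $(\eps_n)\subseteq(0,1)$ satisfying $\eps_n\xrightarrow{\I}0$ and $\{n:|f_n(x)-f(x)|\geq\eps_n\}\in\I$ for each $x\in X$, I would enumerate $B$ increasingly as $(n_m)$ and consider the subsequence $(f_{n_m})$. For each $k\in\omega$ the set $\{n\in B:\eps_n\geq 1/k\}$ is in $\I\cap\mathcal{P}(B)$, hence finite, so $(\eps_{n_m})$ converges to $0$ in the ordinary sense; similarly each set $\{m:|f_{n_m}(x)-f(x)|\geq\eps_{n_m}\}$ is finite. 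Thus $(f_{n_m})$ QN-converges to $f$, with no cardinality assumption needed at all.

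Still within (a), if $\I$ is tall, then Remark \ref{p+c} yields $\cov(\I)\geq\pp>|X|$, so condition (a) of Theorem \ref{thm-cov} is satisfied for this $\I$, and its equivalent condition (b) gives exactly what we want: every $\I$QN-convergent sequence has a QN-convergent subsequence. Combining the two cases completes the proof of (a).

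For part (b), fix any tall ideal $\I$. By Remark \ref{p+c} we have $\cov(\I)\leq\cc\leq|X|$, so the negation of condition (a) in Theorem \ref{thm-cov} holds; hence the negation of condition (b) holds as well. This directly produces a sequence $(f_n)\subseteq\mathbb{R}^X$ that $\I$QN-converges to some $f\in\mathbb{R}^X$ but admits no QN-convergent subsequence, which is the desired conclusion.

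The only place where something beyond a routine quotation of Theorem \ref{thm-cov} is required is the non-tall subcase of (a), and even there the argument is short: the whole point is that on the witnessing set $B$ membership in $\I$ collapses to finiteness, so $\I$QN-convergence restricted to $B$ automatically becomes ordinary QN-convergence. I do not expect any genuine obstacle.
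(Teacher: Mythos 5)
Your proof is correct and follows essentially the same route as the paper: split part (a) by whether $\I$ is tall, handle the tall case of (a) and all of (b) by combining Theorem \ref{thm-cov} with the bounds $\pp\leq\cov(\I)\leq\cc$ from Remark \ref{p+c}. The only (harmless) difference is that for the non-tall case of (a) you give a short direct argument on a set $B$ witnessing non-tallness, whereas the paper simply cites Theorem \ref{BDS}; your argument is valid.
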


\begin{proof}
First item in the case of non-tall ideals follows from Theorem \ref{BDS}. To prove the remaining parts it suffices to observe that $\pp\leq\cov(\I)\leq\cc$ for any tall ideal $\I$ on $\omega$ (cf. Remark \ref{p+c}).
\end{proof}

The anonymous referee of this paper had pointed out that item (b) of the above result can be strengthened in the following way.

\begin{prop}
\label{recenzent-prop1}
If $|X|\geq\cc$, then there is a sequence $(f_n)$ of real-valued functions defined on $X$ such that for every tall ideal $\I$ on $\omega$:
\begin{itemize}
 \item the set $A(\I)=\{x\in X:\ f_n(x)\xrightarrow{\I}0\}$ has cardinality $\cc$;
 \item $f_n\upharpoonright_{A(\I)}\xrightarrow{\I\text{QN}} 0$;
 \item there is $B(\I)\subseteq A(\I)$ of cardinality $\cov(\I)$ such that $(f_n)$ has no subsequence QN-converging to $0$ on $B(\I)$.
\end{itemize}
\end{prop}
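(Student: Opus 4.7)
The plan is to produce one universal sequence $(f_n)$ that handles every tall ideal at once, by encoding subsets of $\omega$ into the values of the functions. Fix $X_0\subseteq X$ with $|X_0|=\cc$ and a bijection $\phi\colon\mathcal{P}(\omega)\to X_0$, and define $f_n\colon X\to\R$ by $f_n(\phi(A))=1$ if $n\in A$, $f_n(\phi(A))=0$ if $n\notin A$, and $f_n(x)=1$ for every $x\in X\setminus X_0$. Setting $f_n\equiv 1$ off the copy $X_0$ of $\mathcal{P}(\omega)$ forces those points to lie outside $A(\I)$ for every proper ideal $\I$ (since $\omega\notin\I$), which is what pins down $|A(\I)|=\cc$ rather than $|X|$.

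First I would handle the first two bullets together. For any tall ideal $\I$, a direct computation shows $A(\I)=\phi[\I]$: if $A\in\I$ then $\{n:|f_n(\phi(A))|\geq\eps\}\subseteq A\in\I$ for every $\eps>0$, while if $A\notin\I$ then $\{n:|f_n(\phi(A))|\geq 1/2\}=A\notin\I$. Any infinite member of $\I$ contributes all $\cc$ of its subsets to $\I$, so $|A(\I)|=|\I|=\cc$. Taking $\eps_n=1/(n+1)$, which $\Fin$- and hence $\I$-converges to $0$, one has $\{n:|f_n(\phi(A))|\geq\eps_n\}\subseteq A\in\I$ for each $\phi(A)\in A(\I)$, which gives $f_n\upharpoonright_{A(\I)}\xrightarrow{\I\text{QN}}0$.

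The third bullet is proved by the same mechanism as the (b)$\Rightarrow$(a) direction of Theorem \ref{thm-cov}. Fix a pairwise distinct family $\{A_\alpha:\alpha<\cov(\I)\}\subseteq\I$ witnessing $\cov(\I)$ and set $B(\I)=\{\phi(A_\alpha):\alpha<\cov(\I)\}\subseteq A(\I)$, so $|B(\I)|=\cov(\I)$. For any subsequence $(f_{n_m})$, the set $S=\{n_m:m\in\omega\}$ is infinite, so some $\alpha$ satisfies $|S\cap A_\alpha|=\omega$; at $x=\phi(A_\alpha)$ the value $f_{n_m}(x)=1$ appears for infinitely many $m$, obstructing QN-convergence to $0$ there with any positive $(\eps'_m)\to 0$.

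No serious obstacle is expected. The guiding idea --- parametrising candidate points by $\mathcal{P}(\omega)$ so that each tall $\I$ automatically picks out its own witnessing slice $\phi[\I]$ and its own witnessing subfamily $\phi[\{A_\alpha\}]$ --- decouples the construction from the choice of $\I$. The only bookkeeping is the mild cardinality adjustment (the $f_n\equiv 1$ trick outside $X_0$) together with the standard observation that every tall ideal has cardinality $\cc$.
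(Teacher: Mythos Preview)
Your proof is correct and follows essentially the same construction as the paper's: both encode subsets of $\omega$ into points of $X$ via a fixed map and set $f_n(x)=\chi_{\text{(code of }x)}(n)$, so that $A(\I)$ is exactly the preimage of $\I$ and the $\cov(\I)$-witnessing family supplies $B(\I)$. The only difference is cosmetic---the paper uses a surjection $X\to[\omega]^\omega$ rather than your bijection $\mathcal{P}(\omega)\to X_0$ with $f_n\equiv 1$ outside $X_0$---and your version is in fact slightly more careful, since it guarantees $|A(\I)|=\cc$ exactly even when $|X|>\cc$, whereas the paper's surjection could a priori yield $|A(\I)|>\cc$.
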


\begin{proof}
Fix a surjection $\phi\colon X\to[\omega]^\omega$ and define a sequence $(f_n)\subseteq\mathbb{R}^X$ by: 
$$f_n\left(x\right)=\left\{\begin{array}{ll}
1 & \mbox{\boldmath{if }} n\in h(x),\\
0 & \mbox{\boldmath{otherwise.}}\\
\end{array}\right.$$
Then given any $x\in X$ we have $\{n\in\omega:\ |f_n(x)|\geq\eps\}=h(x)$ for each $\eps\in(0,1)$. Therefore, $A(\I)=h^{-1}[\I]$ for every ideal $\I$ on $\omega$ and $f_n\upharpoonright_{A(\I)}\xrightarrow{\I\text{QN}} 0$. If $\I$ is tall, then it has cardinality $\cc$ (a tall ideal must have an infinite member and all infinite subsets of that member belong to the ideal as well). Hence, $|A(\I)|=\cc$. 

Let $\mathcal{A}\subseteq[\omega]^\omega$ be the family from the definition of $\cov(\I)$. Find $B(\I)\subseteq A(\I)$ of cardinality $\cov(\I)$ such that $h[B(\I)]=\mathcal{A}$. Now we will show that none of subsequences of $(f_n)$ converges to $0$ on $B(\I)$. Fix any subsequence $(f_{n_m})\subseteq(f_n)$. The set $S=\left\{n_m:\ m\in\omega\right\}$ is infinite, so one can find $x\in B(\I)$ with $S\cap h(x)$ infinite (since $h[B(\I)]=\mathcal{A}$). Then we have: 
$$|\left\{m\in\omega:\ |f_{n_m}(x)|\geq 1\right\}|=|S\cap h(x)|=\omega.$$
\end{proof}

\section{Additivity of $\I$QN-spaces}

Recall that for an ideal $\I$ on $\omega$ $\add(\I\text{QN-space})$ denotes the minimal cardinal $\kappa$ such that there is a perfectly normal non-$\I$QN-space which can be expressed as a union of $\kappa$ many $\I$QN-spaces.

\begin{df}
For an ideal $\I$ on $\omega$ denote by $\mathcal{P}_\I$ the family of all partitions of $\omega$ into sets belonging to $\I$. We define: 
$$\lambda(\I)=\min\left\{|\mathcal{A}|:\ \mathcal{A}\subseteq\mathcal{P}_\I\ \wedge\ \forall_{(B_n)\in\mathcal{P}_\I}\ \exists_{(A_n)\in\mathcal{A}}\ \bigcup_{n\in\omega}\left(B_n\cap\bigcup_{k<n}A_k\right)\notin\I\right\}.$$
\end{df}

\begin{rem}
\label{add-remark}
Observe that $\lambda(\I)\leq\cc$ for every ideal $\I$ on $\omega$, i.e., $\lambda(\I)$ is well defined. Indeed, take $\mathcal{A}=\mathcal{P}_\I$. Then $|\mathcal{A}|\leq\cc$ and for each $(B_n)\in\mathcal{P}_\I$ the partition $(A_n)\in\mathcal{A}$ given by $A_0=B_0\cup B_1$ and $A_{n}=B_{n+1}$ for $n\in\omega\setminus\{0\}$ is such that: 
$$\bigcup_{n\in\omega}\left(B_n\cap\bigcup_{k<n}A_k\right)=\omega\setminus B_0\notin\I.$$
\end{rem}

\begin{rem}
\label{add-remark2}
Observe that $\lambda(\I)\geq\omega_1$ for every ideal $\I$ on $\omega$. Indeed, fix any $\{(A_n^m)\in\mathcal{P}_\I:\ m<\omega_1\}$ and define 
$$B_n=\left(\bigcup_{m\leq n}\bigcup_{k\leq n}A^m_k\right)\setminus\left(\bigcup_{m<n}\bigcup_{k<n}A^m_k\right)$$
for each $n\in\omega$. Then each $B_n$ belongs to $\I$ (as a subset of a finite union of sets belonging to $\I$). Moreover, $(B_n)$ is a partition of $\omega$ (as $(A^0_n)$ is a partition of $\omega$ and $A^0_n\subseteq B_0\cup\ldots\cup B_n$ for each $n\in\omega$). Now it suffices to observe that for each $m\in\omega$ we have:
$$\bigcup_{n\in\omega}\left(B_n\cap\bigcup_{k<n}A^m_k\right)=\bigcup_{n\leq m}\left(B_n\cap\bigcup_{k<n}A^m_k\right)\in\I.$$
\end{rem}

\begin{thm}
\label{add-prop1}
The following are equivalent for any ideal $\I$ on $\omega$:
\begin{itemize}
	\item[(a)] $\kappa<\lambda(\I)$;
	\item[(b)] if $X=\bigcup_{\alpha<\kappa}X_\alpha$ and $(f_n)\subseteq\mathbb{R}^X$ $\I$QN-converges to $f\in\mathbb{R}^X$ on each $X_\alpha$, then $(f_n)$ $\I$QN-converges to $f$ on $X$.
\end{itemize}
\end{thm}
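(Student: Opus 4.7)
My strategy rests on a tight correspondence between $\I$-converging-to-zero sequences $(\delta_n)\subseteq(0,1)$ and partitions $(B_m)\in\mathcal{P}_\I$. Given such a sequence, the scale sets $P_k(\delta)=\{n\in\omega:1/(k+2)\leq\delta_n<1/(k+1)\}$ form a partition of $\omega$ in $\mathcal{P}_\I$, since each $P_k(\delta)\subseteq\{n:\delta_n\geq 1/(k+2)\}\in\I$. Conversely, a partition $(B_m)\in\mathcal{P}_\I$ yields the $\I$-convergent-to-zero sequence $\eps_n=1/(m+1)$ for $n\in B_m$. The strict inequality $k<n$ in the definition of $\lambda(\I)$ is calibrated exactly to this scale, and the proof amounts to reading off the two directions through this correspondence.

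For (a)$\Rightarrow$(b), I would take witnesses $(\eps^\alpha_n)\subseteq(0,1)$ of $\I$QN-convergence on each $X_\alpha$, convert them to partitions $A^\alpha_k=P_k(\eps^\alpha)$, and invoke $\kappa<\lambda(\I)$ to find a single partition $(B_m)\in\mathcal{P}_\I$ with $\bigcup_m(B_m\cap\bigcup_{k<m}A^\alpha_k)\in\I$ simultaneously for all $\alpha<\kappa$. The candidate uniform witness is then $\eps_n=1/(m+1)$ for $n\in B_m$. For any $x\in X$, pick $\alpha$ with $x\in X_\alpha$; splitting on whether $n\in B_m\cap A^\alpha_k$ has $k\geq m$ (then $\eps^\alpha_n<1/(k+1)\leq\eps_n$, so the point is absorbed by $\{n:|f_n(x)-f(x)|\geq\eps^\alpha_n\}\in\I$) or $k<m$ (then $n$ lies in the $\I$-set provided by $(B_m)$), one sees that $\{n:|f_n(x)-f(x)|\geq\eps_n\}$ is contained in an $\I$-set.

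For (b)$\Rightarrow$(a), I would take a witnessing family $\{(A^\alpha_k):\alpha<\lambda(\I)\}$ for $\lambda(\I)$ and build a \emph{diagonal} counterexample: let $X=\lambda(\I)$ decomposed by $X_\alpha=\{\alpha\}$ for $\alpha<\lambda(\I)$ and $X_\alpha=\emptyset$ for $\lambda(\I)\leq\alpha<\kappa$, and put $f_n(\alpha)=1/(k+1)$ whenever $n\in A^\alpha_k$. Then $\eps^\alpha_n=2/(k+1)$ witnesses $\I$QN-convergence of $(f_n)$ to $0$ on each singleton $X_\alpha$, since the corresponding "bad" set is empty and $\{n:\eps^\alpha_n\geq 2/(m+1)\}=\bigcup_{k\leq m}A^\alpha_k\in\I$. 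If some $(\eps_n)\subseteq(0,1)$ witnessed $\I$QN-convergence on all of $X$, the partition $B_m=P_m(\eps)\in\mathcal{P}_\I$ would have to be defeated by some $A^{\alpha_0}$, i.e., $\bigcup_m(B_m\cap\bigcup_{k<m}A^{\alpha_0}_k)\notin\I$; but on this set $|f_n(\alpha_0)|=1/(k+1)>1/(m+1)>\eps_n$, contradicting $\I$QN-convergence at $\alpha_0$.

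The only real care needed is the arithmetic of aligning the scales: the bands $P_k$ must be chosen so that "$k\geq m$" translates into $\eps^\alpha_n\leq\eps_n$ (giving one $\I$-set) while "$k<m$" collapses onto the specific expression $B_m\cap\bigcup_{k<m}A_k$ appearing verbatim in the definition of $\lambda(\I)$. The doubled witness $\eps^\alpha_n=2/(k+1)$ in the construction for (b)$\Rightarrow$(a) is the small twist that avoids the equality $|f_n(\alpha)|=\eps^\alpha_n$ on the "good" set. Everything else is bookkeeping.
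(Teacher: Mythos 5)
Your proposal is correct and follows essentially the same route as the paper's proof: the same translation between $\I$-null sequences in $(0,1)$ and partitions in $\mathcal{P}_\I$ via the scale bands $P_k$, the same uniform witness $\eps_n=1/(m+1)$ on $B_m$ in (a)$\Rightarrow$(b), and the same singleton-decomposition counterexample with $f_n(\alpha)=1/(k+1)$ on $A^\alpha_k$ in (b)$\Rightarrow$(a). Your explicit case split on $k\geq m$ versus $k<m$ and the doubled witness $2/(k+1)$ are minor cosmetic refinements of steps the paper states as a set containment and leaves implicit, respectively.
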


\begin{proof}
{\bf (a)$\Rightarrow$(b): }Fix $X$ and $X_\alpha\subseteq X$, for $\alpha<\kappa<\lambda(\I)$, with $X=\bigcup_{\alpha<\kappa}X_\alpha$. Let $(f_n)\subseteq\mathbb{R}^X$ and $f\in\mathbb{R}^X$. Suppose that $(f_n)$ $\I$QN-converges to $f$ on each $X_\alpha$ with the witnessing sequence $(\eps_n^\alpha)\subseteq(0,1)$. Define: 
$$A^\alpha_k=\left\{n\in\omega:\ \frac{1}{k+2}\leq\eps^\alpha_n<\frac{1}{k+1}\right\}$$
for each $n\in\omega$ and $\alpha<\kappa$. Then $(A^\alpha_n)_{n\in\omega}$, for any $\alpha$, is a partition of $\omega$ and each $A^\alpha_k$ belongs to $\I$ since $(\eps_n^\alpha)$ is $\I$-convergent to $0$. As $\kappa<\lambda(\I)$, there is $(B_n)\in\mathcal{P}_\I$ such that $\bigcup_{n\in\omega}\left(B_n\cap\bigcup_{k<n}A^\alpha_k\right)\in\I$ for each $\alpha$.

Define a sequence $(\eps_k)\subseteq(0,1)$ by: 
$$\eps_k=\frac{1}{n+1}\ \Longleftrightarrow\ k\in B_n.$$
Then $(\eps_k)$ is $\I$-convergent to $0$. We will show that it witnesses $f_n\xrightarrow{\I\text{QN}}f$ on $X$.

Fix $x\in X$ and let $\alpha_0<\kappa$ be such that $x\in X_{\alpha_0}$. We have:
$$\left\{k\in\omega:\ |f_k(x)-f(x)|\geq\eps_k\right\}\subseteq$$ $$\subseteq\left\{k\in\omega:\ \eps_k<\eps_k^{\alpha_0}\right\}\cup\left\{k\in\omega:\ |f_k(x)-f(x)|\geq\eps^{\alpha_0}_k\right\}.$$
The latter set belongs to $\I$ since $(\eps_n^{\alpha_0})$ witnesses $f_n\xrightarrow{\I\text{QN}}f$ on $X_{\alpha_0}$. Now it suffices to show that $\left\{k\in\omega:\ \eps_k<\eps_k^{\alpha_0}\right\}\in\I$. Indeed, we have:
$$\left\{k\in\omega:\ \eps_k<\eps_k^{\alpha_0}\right\}=\bigcup_{n\in\omega}\left(B_n\cap\bigcup_{k<n}A^{\alpha_0}_k\right)\in\I.$$

{\bf (b)$\Rightarrow$(a): }Fix any set $X$ of cardinality at least $\lambda(\I)$, let $\phi\colon\lambda(\I)\to X$ be an injection and define $X_\alpha=\{\phi(\alpha)\}$ for each $\alpha<\lambda(\I)$. Set also a family $\{(A^\alpha_n)_{n\in\omega}:\ \alpha<\lambda(\I)\}\subseteq\mathcal{P}_\I$ such that for every $(B_n)\in\mathcal{P}_\I$ one can find $\alpha<\lambda(\I)$ with $\bigcup_{n\in\omega}\left(B_n\cap\bigcup_{k<n}A^\alpha_k\right)\notin\I$. Define a sequence of functions $(f_n)\in\mathbb{R}^X$ by:
$$f_k\left(x\right)=\left\{\begin{array}{ll}
\frac{1}{n+1} & \mbox{\boldmath{if }} k\in A^\alpha_n\mbox{\boldmath{ and }}x=\phi(\alpha)\mbox{\boldmath{ for some }}\alpha<\lambda(\I),\\
0 & \mbox{\boldmath{otherwise.}}\\
\end{array}\right.$$
Then $f_n\xrightarrow{\I\text{QN}}0$ on each $X_\alpha$ as $(A^\alpha_n)_{n\in\omega}\in\mathcal{P}_\I$. 

We will show that $(f_n)$ does not $\I$QN-converge to $0$ on $X$. Suppose to the contrary that $f_n\xrightarrow{\I\text{QN}}0$ on $X$ and it is witnessed by some sequence $(\eps_k)\subseteq(0,1)$. Define:
$$B_n=\left\{k\in\omega:\ \frac{1}{n+2}\leq\eps_k<\frac{1}{n+1}\right\}$$
for all $n\in\omega$. Then $(B_n)\in\mathcal{P}_\I$. Hence, by the definition of $\{(A^\alpha_n)_{n\in\omega}:\ \alpha<\lambda(\I)\}$, there is $\alpha_0<\lambda(\I)$ such that $C=\bigcup_{n\in\omega}\left(B_n\cap\bigcup_{k<n}A^{\alpha_0}_k\right)\notin\I$. We will show that $C\subseteq\{k\in\omega:\ |f_k(\phi(\alpha_0))|\geq\eps_k\}$, which will end the proof. Fix any $m\in C$. Then there is $n\in\omega$ such that $m\in B_n\cap\bigcup_{k<n}A^{\alpha_0}_k$. Therefore, $\eps_m<\frac{1}{n+1}$ and $f_m(\phi(\alpha_0))\geq\frac{1}{n+1}$. Hence, $m\in\{k\in\omega:\ |f_k(\phi(\alpha_0))|\geq\eps_k\}$.
\end{proof}

From the above Proposition we can easily derive a connection between $\lambda(\I)$ and $\add(\I\text{QN-space})$.

\begin{cor}
\label{add-cor}
$\lambda(\I)\leq\add(\I\text{QN-space})$ for every ideal $\I$ on $\omega$.
\end{cor}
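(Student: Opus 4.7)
The plan is to deduce this directly from the equivalence established in Theorem \ref{add-prop1}, so the heavy lifting is already done. The only conceptual point is to translate between the abstract convergence statement in Theorem \ref{add-prop1}(b) and the topological definition of $\add(\I\text{QN-space})$.

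Concretely, I would argue by contraposition on the cardinality parameter. Fix any cardinal $\kappa<\lambda(\I)$, and let $X$ be a perfectly normal topological space which can be written as $X=\bigcup_{\alpha<\kappa}X_\alpha$ where each $X_\alpha$ is an $\I$QN-space (equipped with the subspace topology). I want to show that $X$ itself must be an $\I$QN-space; this yields $\add(\I\text{QN-space})\geq\lambda(\I)$.

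To that end, let $(f_n)\subseteq\mathbb{R}^X$ be any sequence of continuous functions converging pointwise to $0$ on $X$. For each $\alpha<\kappa$, the restricted sequence $(f_n\!\upharpoonright X_\alpha)$ consists of continuous functions on the subspace $X_\alpha$ and converges pointwise to $0$. Since $X_\alpha$ is an $\I$QN-space, $(f_n\!\upharpoonright X_\alpha)$ $\I$QN-converges to $0$ on $X_\alpha$. This is exactly the hypothesis of Theorem \ref{add-prop1}(b) applied to $f\equiv 0$ and the decomposition $X=\bigcup_{\alpha<\kappa}X_\alpha$. Since $\kappa<\lambda(\I)$, the theorem's equivalence yields that $(f_n)$ $\I$QN-converges to $0$ on the whole of $X$. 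As $(f_n)$ was arbitrary, $X$ is an $\I$QN-space, as required.

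There is really no difficult step; the main (minor) subtlety is ensuring that the restrictions $f_n\!\upharpoonright X_\alpha$ retain the properties needed to invoke the $\I$QN-space hypothesis on $X_\alpha$, which is immediate because continuity and pointwise convergence are preserved under passage to a subspace. Taking the contrapositive of what we just proved: any perfectly normal non-$\I$QN-space expressed as a union of $\I$QN-spaces must be a union of at least $\lambda(\I)$ many of them, giving $\lambda(\I)\leq\add(\I\text{QN-space})$.
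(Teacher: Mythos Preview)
Your argument is correct and is precisely the intended one: the paper's own proof reads simply ``Obvious,'' since the corollary is an immediate consequence of Theorem~\ref{add-prop1}, and what you have written is exactly the unpacking of that step. There is nothing to add.
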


\begin{proof}
Obvious.
\end{proof}

\begin{rem}
Notice that $\lambda(\I)$ and $\add(\I\text{QN-space})$ are not the same. Indeed, by Theorem \ref{Supina2}, any topological space is a $(\Fin\otimes\Fin)$QN-space. Therefore, it does not make sense to consider additivity of $(\Fin\otimes\Fin)$QN-spaces. However, $\lambda(\Fin\otimes\Fin)\leq\cc$ by Remark \ref{add-remark}.
\end{rem}

Now we proceed to obtaining a lower and upper bounds for $\add(\I\text{QN-space})$ and $\lambda(\I)$. 

\begin{cor}
\label{add-cor2}
We have:
\begin{itemize}
	\item[(a)] $\omega_1\leq\lambda(\I)\leq\add(\I\text{QN-space})\leq\dd$ for every weak P-ideal $\I$ on $\omega$;
	\item[(b)] $\omega_1\leq\add(\I\text{QN-space})\leq\bb$ for every ideal $\I$ on $\omega$ which is $\leq_{K}$-below some $\mathtt{F_\sigma}$ ideal;
	\item[(c)] $\lambda(\I)=\add(\I\text{QN-space})=\bb$ for every P-ideal $\I$ on $\omega$ which is $\leq_{K}$-below some $\mathtt{F_\sigma}$ ideal. In particular, $\lambda(\I)=\add(\I\text{QN-space})=\bb$ for every analytic P-ideal generated by an unbounded submeasure.
\end{itemize}
\end{cor}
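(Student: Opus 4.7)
For part (a), the first inequality is Remark \ref{add-remark2} and the second is Corollary \ref{add-cor}. For the third, I would establish the general inequality $\add(\I\text{QN-space})\leq\non(\I\text{QN-space})$ and then invoke $\non(\I\text{QN-space})\leq\dd$ from Theorem \ref{main}(a). The general inequality follows by taking a perfectly normal non-$\I$QN-space $X$ of cardinality $\non(\I\text{QN-space})$ and writing $X=\bigcup_{x\in X}\{x\}$; each singleton is trivially an $\I$QN-space, since every real sequence converging to $0$ already converges quasi-normally on a one-point space.

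For part (b), I would first observe that any ideal $\leq_K$-below some $\mathtt{F_\sigma}$ ideal must be a weak P-ideal. Indeed, Lemma \ref{K-KB} provides $\I\subseteq\J$ for an $\mathtt{F_\sigma}$ ideal $\J=\Fin(\phi)$, and every such $\J$ is itself a weak P-ideal: given a partition $(A_n)\subseteq\J$ of $\omega$, one inductively picks disjoint finite sets $F_M\subseteq\omega\setminus\bigcup_{n\in T_M}A_n$ with $\phi(F_M)\geq M$ (possible by subadditivity since $\phi(\omega)=\infty$ while $\phi(\bigcup_{n\in T_M}A_n)<\infty$), after which $B=\bigcup_M F_M\notin\J$ satisfies $B\cap A_n$ finite for every $n$. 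The weak P-ideal property is inherited by subideals, so $\I$ is a weak P-ideal. Hence part (a) applies and gives $\omega_1\leq\add(\I\text{QN-space})$. For the upper bound, Theorem \ref{main}(b) gives $\non(\I\text{QN-space})=\bb$, and the singleton argument from (a) yields $\add(\I\text{QN-space})\leq\non(\I\text{QN-space})=\bb$.

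For part (c), combining (b) with Corollary \ref{add-cor} leaves only the lower bound $\bb\leq\lambda(\I)$ for P-ideals $\I$ (the $\leq_K\mathtt{F_\sigma}$ hypothesis enters only through (b)). Given $\kappa<\bb$ and $\{(A^\alpha_n):\alpha<\kappa\}\subseteq\mathcal{P}_\I$, my plan is to construct a partition $(B_n)\in\mathcal{P}_\I$ as follows. Using the P-ideal property, for each $\alpha$ pick $A_\alpha\in\I$ with $A^\alpha_n\setminus A_\alpha$ finite for every $n$, and define $f_\alpha(n)=1+\max(A^\alpha_n\setminus A_\alpha)$, so that $A^\alpha_n\cap[f_\alpha(n),\infty)\subseteq A_\alpha$. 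Since $\kappa<\bb$, choose a strictly increasing $h\in\omega^\omega$ dominating every $f_\alpha$ mod finite. Set $B_n=[h(n-1),h(n))$ (with $h(-1)=0$); each $B_n$ is finite, hence in $\I$, and $(B_n)$ is a partition of $\omega$.

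The central verification, and main obstacle, is to show $\bigcup_n(B_n\cap\bigcup_{k<n}A^\alpha_k)\in\I$ for every $\alpha$. I would argue that any $m$ in this set lying outside $A_\alpha$, with $m\in B_n\cap A^\alpha_k$ and $k<n$, satisfies $m<f_\alpha(k)$ (by choice of $f_\alpha$) and $m\geq h(n-1)\geq h(k)$ (since $h$ is increasing and $k\leq n-1$); these collide with $f_\alpha(k)\leq h(k)$ whenever $k\geq N_\alpha$, the threshold past which $h$ dominates $f_\alpha$. Therefore every such $m$ with $k\geq N_\alpha$ must lie in $A_\alpha$, while the remaining contributions lie in $\bigcup_{k<N_\alpha}A^\alpha_k$, giving a total containment in $A_\alpha\cup\bigcup_{k<N_\alpha}A^\alpha_k\in\I$. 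The technical core is aligning the $B$-partition intervals with the pseudo-union thresholds $f_\alpha$ so that a single $\bb$-domination handles every $\alpha$ simultaneously --- this is exactly where the P-ideal hypothesis is used. For the \emph{in particular} clause, the proof of Corollary \ref{P-ideals} shows that analytic P-ideals generated by unbounded submeasures are contained in the $\mathtt{F_\sigma}$ ideal $\Fin(\phi)$, hence satisfy the hypothesis of (c).
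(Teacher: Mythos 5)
Your proposal is correct and, for items (a) and (b), follows the paper's argument exactly: the chain $\omega_1\leq\lambda(\I)\leq\add(\I\text{QN-space})\leq\non(\I\text{QN-space})$ via Remark \ref{add-remark2}, Corollary \ref{add-cor} and the singleton decomposition, capped by Theorem \ref{main}(a) resp.\ (b). (Your verification in (b) that ideals $\leq_K$-below an $\mathtt{F_\sigma}$ ideal are weak P-ideals is not needed for that chain, since the first two inequalities hold for arbitrary ideals, though it is a reasonable check that $\add(\I\text{QN-space})$ is well defined.) The one genuine divergence is in (c): the paper simply cites \cite[Theorem 2.2]{Dascha} for the inequality $\lambda(\I)\geq\bb$ for P-ideals, whereas you prove it from scratch, using pseudo-unions $A_\alpha$, threshold functions $f_\alpha$, and a single dominating increasing $h$ whose consecutive intervals $[h(n-1),h(n))$ form the required partition. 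Your verification that $\bigcup_n\bigl(B_n\cap\bigcup_{k<n}A^\alpha_k\bigr)\subseteq A_\alpha\cup\bigcup_{k<N_\alpha}A^\alpha_k$ is sound (modulo the trivial convention when $A^\alpha_n\setminus A_\alpha=\emptyset$), so your route is a correct, self-contained replacement for the citation; the paper's route is shorter but rests on the external result.
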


\begin{proof}
{\bf (a): }In fact, for every weak P-ideal $\I$ on $\omega$ we have the following sequence of inequalities:
$$\omega_1\leq\lambda(\I)\leq\add(\I\text{QN-space})\leq\non(\I\text{QN-space})\leq\dd.$$
Indeed, the first inequality is Remark \ref{add-remark2}, the second one is Corollary \ref{add-cor}, the third inequality is obvious (as all singleton spaces are $\I$QN-spaces for every ideal $\I$) and the last one follows from item (a) of Theorem \ref{main}.

{\bf (b): }It suffices to use item (b) of Theorem \ref{main} instead of (a) in the above considerations. 

{\bf (c): }The first part is a combination of item (b) and \cite[Theorem 2.2]{Dascha} stating that $\lambda(\I)\geq\bb$ for all P-ideals $\I$. Analytic P-ideals generated by unbounded submeasures are $\leq_{K}$-below some $\mathtt{F_\sigma}$ ideal (cf. the proof of Corollary \ref{P-ideals}) and $\mathtt{F_\sigma}$ P-ideals are generated by unbounded submeasures.
\end{proof}

By Theorem \ref{BDS} we have $\add(\I\text{QN-space})=\bb$ for all non-tall ideals $\I$. By the last item of Corollary \ref{add-cor2} we also have $\add(\I\text{QN-space})=\bb$ for analytic P-ideals $\I$ on $\omega$ which are $\leq_{K}$-below some $\mathtt{F_\sigma}$ ideal. We want to end this section with an example of a class of tall ideals $\I$ which are not P-ideals and satisfy $\add(\I\text{QN-space})=\bb$.

\begin{prop}
\label{lprop}
For any ideal $\I$ on $\omega$ we have $\lambda(\I\otimes\emptyset)=\lambda(\I)$.
\end{prop}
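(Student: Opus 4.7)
The plan is to prove the equality by establishing both inequalities separately, using the natural projection $\pi:\omega\times\omega\to\omega$ onto the first coordinate to transfer witnessing families between $\mathcal{P}_\I$ and $\mathcal{P}_{\I\otimes\emptyset}$. The two main observations that drive everything are: (i) if $(B_n)\in\mathcal{P}_\I$, then $(B_n\times\omega)\in\mathcal{P}_{\I\otimes\emptyset}$, and (ii) if $A'\in\I\otimes\emptyset$, then $\pi(A')\in\I$ by definition. Projection does not, however, send partitions to partitions, so a disjointification step will be needed.

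For the inequality $\lambda(\I)\leq\lambda(\I\otimes\emptyset)$, I would take a witnessing family $\mathcal{A}'\subseteq\mathcal{P}_{\I\otimes\emptyset}$ of size $\lambda(\I\otimes\emptyset)$. For each $(A_n')\in\mathcal{A}'$, set $A_n=\pi(A_n')\in\I$ and replace it by its disjointification $\hat A_n=A_n\setminus\bigcup_{k<n}A_k$, obtaining a member of $\mathcal{P}_\I$. Given any $(B_n)\in\mathcal{P}_\I$, the lifted partition $(B_n\times\omega)$ lies in $\mathcal{P}_{\I\otimes\emptyset}$, so by hypothesis there exists $(A_n')\in\mathcal{A}'$ with $\bigcup_n((B_n\times\omega)\cap\bigcup_{k<n}A_k')\notin\I\otimes\emptyset$. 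A short direct computation shows that the projection of this set equals $\bigcup_n(B_n\cap\bigcup_{k<n}A_k)$, and since $\bigcup_{k<n}\hat A_k=\bigcup_{k<n}A_k$, this is exactly $\bigcup_n(B_n\cap\bigcup_{k<n}\hat A_k)\notin\I$, as required.

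For the reverse inequality $\lambda(\I\otimes\emptyset)\leq\lambda(\I)$, I would take a witnessing $\mathcal{A}\subseteq\mathcal{P}_\I$ and lift each $(A_n)\in\mathcal{A}$ to $(A_n\times\omega)\in\mathcal{P}_{\I\otimes\emptyset}$. Given $(B_n')\in\mathcal{P}_{\I\otimes\emptyset}$, form $B_n=\pi(B_n')\in\I$ and disjointify to $\hat B_n$, producing a partition in $\mathcal{P}_\I$. Applying the hypothesis yields some $(A_n)\in\mathcal{A}$ with $\bigcup_n(\hat B_n\cap\bigcup_{k<n}A_k)\notin\I$. Projecting $\bigcup_n(B_n'\cap\bigcup_{k<n}(A_k\times\omega))$ gives $\bigcup_n(B_n\cap\bigcup_{k<n}A_k)$, which contains the previous set because $\hat B_n\subseteq B_n$; hence the lifted bad set cannot lie in $\I\otimes\emptyset$.

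The only real obstacle is bookkeeping: because projection collapses partitions into merely covering families, one must introduce the disjointification $\hat A_n$ (resp.\ $\hat B_n$) and then verify that the telescoping expression $\bigcup_n(B_n\cap\bigcup_{k<n}A_k)$ is invariant (or at least monotone in the right direction) when one replaces $A_k$ by $\hat A_k$, or $B_n$ by $\hat B_n$. Both invariances follow from the elementary identity $\bigcup_{k<n}\hat A_k=\bigcup_{k<n}A_k$ and from the fact that each element of $\omega$ lies in at most one $\hat B_n$. Once these identifications are in place, the two inequalities are essentially symmetric and the equality $\lambda(\I\otimes\emptyset)=\lambda(\I)$ falls out.
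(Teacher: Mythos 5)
Your proof is correct and follows essentially the same route as the paper: the paper's map $f((A_n))=(A_n\times\omega)$ is your lifting, and its map $g$, defined by sending $i$ to the least $n$ with $A_n\cap(\{i\}\times\omega)\neq\emptyset$, is exactly your ``project then disjointify'' step, with the same projection argument establishing each inequality.
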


\begin{proof}
Define $f\colon\mathcal{P}_{\I}\to\mathcal{P}_{\I\otimes\emptyset}$ by $f((A_n))=(A^f_n)=(A_n\times\omega)$ and $g\colon\mathcal{P}_{\I\otimes\emptyset}\to\mathcal{P}_{\I}$ by $g((A_n))=(A^g_n)$, where 
$$i\in A^g_n\ \Longleftrightarrow\ n=\min\{k\in\omega:\ A_k\cap(\{i\}\times\omega)\neq\emptyset\}.$$

First we will show that $\lambda(\I\otimes\emptyset)\leq\lambda(\I)$. Let $\mathcal{A}\subseteq\mathcal{P}_\I$ be as in the definition of $\lambda(\I)$. We claim that $f[\mathcal{A}]\subseteq\mathcal{P}_{\I\otimes\emptyset}$ is as needed. Fix any $(B_n)\in\mathcal{P}_{\I\otimes\emptyset}$. Then there is $(A_n)\in\mathcal{A}$ with $\bigcup_{n\in\omega}\left(B_n^g\cap\bigcup_{k<n}A_k\right)\notin\I$. Note that for each $i\in B_n^g$ there is $j\in\omega$ with $(i,j)\in B_n$. If additionally $i\in\bigcup_{k<n}A_k$, then $(i,j)\in\bigcup_{k<n}A^f_k$. Hence, $\bigcup_{n\in\omega}\left(B_n\cap\bigcup_{k<n}A^f_k\right)\notin\I\otimes\emptyset$.

Now we will show that $\lambda(\I)\leq\lambda(\I\otimes\emptyset)$. Let $\mathcal{A}\subseteq\mathcal{P}_{\I\otimes\emptyset}$ be as in the definition of $\lambda(\I\otimes\emptyset)$. We claim that $g[\mathcal{A}]\subseteq\mathcal{P}_{\I}$ is as needed. Fix any $(B_n)\in\mathcal{P}_{\I}$. Then there is $(A_n)\in\mathcal{A}$ with $\bigcup_{n\in\omega}\left(B_n^f\cap\bigcup_{k<n}A_k\right)\notin\I\otimes\emptyset$. Hence, 
$$\bigcup_{n\in\omega}\left(B_n\cap\bigcup_{k<n}A^g_k\right)=\bigcup_{n\in\omega}\left\{i\in B_n:\ \exists_{j\in\omega}\ (i,j)\in\bigcup_{k<n}A_k\right\}\notin\I.$$
\end{proof}

\begin{lem}
\label{llem}
$\I\otimes\emptyset\leq_{KB}\I$ for any ideal $\I$ on $\omega$.
\end{lem}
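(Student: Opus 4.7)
The plan is to exhibit an explicit finite-to-one function $f\colon\omega\to\omega\times\omega$ and verify the Katětov--Blass reduction directly. The obvious candidate is $f(k)=(k,0)$, which embeds $\omega$ along a fixed row of the grid. It is one-to-one, in particular finite-to-one, so the condition on fibres in the definition of $\leq_{KB}$ is free.

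The next step is to check the implication $A\in\I\otimes\emptyset\Rightarrow f^{-1}[A]\in\I$. Fix such an $A$. Unwinding the definition of $\I\otimes\emptyset$, the set of occupied columns
\[
S=\{n\in\omega:A\cap(\{n\}\times\omega)\neq\emptyset\}
\]
lies in $\I$. Now any $k\in f^{-1}[A]$ satisfies $(k,0)\in A$, so in particular $A\cap(\{k\}\times\omega)\neq\emptyset$ and hence $k\in S$. This gives $f^{-1}[A]\subseteq S\in\I$, and since $\I$ is closed under subsets, $f^{-1}[A]\in\I$, as required.

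There is no real obstacle here: the whole point of the construction is that $\I\otimes\emptyset$ only records which columns of $\omega\times\omega$ are hit, and that piece of data is captured by the first coordinate of $f(k)=(k,0)$; restricting the second coordinate to $0$ discards precisely the vertical information that $\I\otimes\emptyset$ ignores. The only thing one might worry about is whether the paper's $\leq_{KB}$ demands surjectivity of the reducing map, but the definition recalled in the preliminaries only requires it to be finite-to-one, so the injection $k\mapsto(k,0)$ is admissible.
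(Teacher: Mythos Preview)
Your proof is correct and is essentially identical to the paper's own argument: the same map $f(k)=(k,0)$ is used, and the inclusion $f^{-1}[A]\subseteq\{n\in\omega:A\cap(\{n\}\times\omega)\neq\emptyset\}\in\I$ is exactly what the paper writes down.
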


\begin{proof}
We claim that the function $f\colon\omega\to\omega\times\omega$ given by $f(n)=(n,0)$, for $n\in\omega$, witnesses $\I\otimes\emptyset\leq_{KB}\I$. Indeed, $f$ is finite-to-one (even one-to-one) and given any $A\in\I\otimes\emptyset$ we have:
$$f^{-1}[A]\subseteq\{n\in\omega:\ A\cap(\{n\}\times\omega)\neq\emptyset\}\in\I.$$
\end{proof}

\begin{cor}
\label{add-cor3}
Let $\I$ be a P-ideal and $\J$ be any ideal on $\omega$ isomorphic to $\I\otimes\emptyset$. Then we have:
\begin{itemize}
	\item[(a)] if $\non(\I\text{QN-space})=\bb$, then $$\add(\J\text{QN-space})=\non(\J\text{QN-space})=\bb;$$
	\item[(b)] if $\I$ is $\leq_{K}$-below some $\mathtt{F_\sigma}$ ideal (in particular, if $\I$ is generated by an unbounded submeasure), then $$\add(\J\text{QN-space})=\non(\J\text{QN-space})=\non(\J\text{wQN-space})=\bb.$$
\end{itemize}
\end{cor}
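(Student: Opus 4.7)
My plan is to deduce both parts by bundling together the transfer principles already established. Two workhorses will feature: Lemma \ref{llem} (which gives $\I\otimes\emptyset\leq_{KB}\I$) and Proposition \ref{lprop} (which gives $\lambda(\I\otimes\emptyset)=\lambda(\I)$). A preliminary point that needs attention is that $\J$ must be a weak P-ideal so that $\add(\J\text{QN-space})$ and $\non(\J\text{QN-space})$ are nontrivially defined. To verify this I would take any partition $(A_k)$ of $\omega\times\omega$ into members of $\I\otimes\emptyset$, set $E_k=\{n:A_k\cap(\{n\}\times\omega)\neq\emptyset\}\in\I$, refine $(E_k)$ to a partition of $\omega$ by $F_k=E_k\setminus\bigcup_{j<k}E_j\in\I$, and apply weak P-ideality of $\I$ (available because $\I$ is a P-ideal) to $(F_k)$ to obtain $E\notin\I$ meeting each $F_k$ in a finite set; the selector $C=\{p_n:n\in E\}$, with $p_n\in A_{k(n)}\cap(\{n\}\times\omega)$ and $k(n)$ the unique $k$ satisfying $n\in F_k$, lies outside $\I\otimes\emptyset$ and meets every $A_k$ in a finite set.

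For part (a), the strategy is to sandwich $\non(\J\text{QN-space})$ and $\add(\J\text{QN-space})$ between two copies of $\bb$. The lower bound $\bb\leq\non(\J\text{QN-space})$ follows from $\Fin\subseteq\J$, which makes every QN-space a $\J$QN-space, combined with $\non(\text{QN-space})=\bb$ from \cite[Corollary 3.2]{BRR}. The upper bound uses Lemma \ref{llem} and Theorem \ref{Supina1}: since $\J\leq_{KB}\I$, every $\J$QN-space is an $\I$QN-space, so any perfectly normal non-$\I$QN-space is simultaneously a non-$\J$QN-space, giving $\non(\J\text{QN-space})\leq\non(\I\text{QN-space})=\bb$. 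The trivial bound $\add(\J\text{QN-space})\leq\non(\J\text{QN-space})$ handles the upper side of additivity, and for the lower side I chain
\[\bb\leq\lambda(\I)=\lambda(\J)\leq\add(\J\text{QN-space}),\]
using $\bb\leq\lambda(\I)$ for P-ideals from \cite[Theorem 2.2]{Dascha}, Proposition \ref{lprop}, and Corollary \ref{add-cor}.

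For part (b), Theorem \ref{main}(b) applied to $\I$ supplies $\non(\I\text{QN-space})=\bb$, so the hypothesis of part (a) holds automatically and (a) yields $\add(\J\text{QN-space})=\non(\J\text{QN-space})=\bb$. For $\non(\J\text{wQN-space})=\bb$ I would apply Theorem \ref{main}(b) directly to $\J$: since $\J\leq_{KB}\I\leq_K\mathcal{K}$ for some $\mathtt{F_\sigma}$ ideal $\mathcal{K}$, transitivity of the Kat\v{e}tov order yields $\J\leq_K\mathcal{K}$, whence the conclusion follows. The only real obstacle is the bookkeeping together with the verification that $\J$ is a weak P-ideal; the genuinely combinatorial content has already been absorbed into Theorem \ref{main}, Theorem \ref{Supina1}, Proposition \ref{lprop}, and the Das--Chandra bound.
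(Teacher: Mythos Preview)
Your proposal is correct and follows essentially the same route as the paper: the chain $\bb\leq\lambda(\I)=\lambda(\J)\leq\add(\J\text{QN-space})\leq\non(\J\text{QN-space})$ via \cite[Theorem 2.2]{Dascha}, Proposition~\ref{lprop} and Corollary~\ref{add-cor}, the upper bound $\non(\J\text{QN-space})\leq\non(\I\text{QN-space})$ via Lemma~\ref{llem} and Theorem~\ref{Supina1}, and for (b) the transitivity argument $\J\leq_K\I\leq_K\mathcal{K}$ feeding into Theorem~\ref{main}(b). Your explicit verification that $\J$ is a weak P-ideal is a worthwhile addition---the paper leaves this implicit, yet it is needed for $\non(\J\text{QN-space})$ and $\add(\J\text{QN-space})$ to be meaningfully defined.
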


\begin{proof}
By \cite[Theorem 2.2]{Dascha}, $\lambda(\I)\geq\bb$ for each P-ideal $\I$. Hence, by Proposition \ref{lprop}, we have:
$$\bb\leq\lambda(\I)=\lambda(\J)\leq\add(\J\text{QN-space})\leq\non(\J\text{QN-space})\leq\non(\J\text{wQN-space}).$$
Theorem \ref{Supina1} and Lemma \ref{llem} give us $\non(\J\text{QN-space})\leq\non(\I\text{QN-space})=\bb$. This proves part (a). To show part (b) observe that $\J$ is $\leq_{K}$-below the same $\mathtt{F_\sigma}$ ideal as $\I$ (hence, $\non(\J\text{wQN-space})=\bb$ by Theorem \ref{main}). Indeed, this follows from transitivity of the Kat\v{e}tov order, as $\J\leq_{K}\I\otimes\emptyset\leq_{K}\I$ by Lemma \ref{llem}, and the fact that $\J$ and $\I\otimes\emptyset$ are isomorphic.
\end{proof}

\begin{rem}
Note that the equality $\add(\J\text{QN-space})=\bb$ in Corollary \ref{add-cor3} cannot be derived from Corollary \ref{add-cor2}, as the ideal $\J$ from Corollary \ref{add-cor3} is never a P-ideal. Indeed, it suffices to show that $\I\otimes\emptyset$ is not a P-ideal, for any ideal $\I$ on $\omega$, and this is witnessed by the sequence $(\{n\}\times\omega)\subseteq\I\otimes\emptyset$. What is more, it is easy to show that $\I\otimes\emptyset$ is tall if and only if $\I$ is tall. Hence, if $\I$ is a tall analytic P-ideal generated by an unbounded submeasure (this is the case for instance for each summable ideal or a simple density ideal which is not an Erd\H{o}s-Ulam ideal -- see Subsection \ref{submeasures}), then item (b) of Corollary \ref{add-cor3} gives us $\add(\J\text{QN-space})=\bb$ for a tall ideal $\J$ which is not a P-ideal.
\end{rem}

\subsection*{Acknowledgement}

We would like to thank the anonymous referee for many valuable remarks that allowed to improve this paper. In particular, the author is grateful for pointing out Lemma \ref{K-KB} (which allowed to strengthen item (b) of Theorem \ref{main}), Proposition \ref{recenzent-prop1} and ideas which led to improvement of some results from Section $4$.


\begin{thebibliography}{abc}

\bibitem{BDFS} 
M. Balcerzak, P. Das, M. Filipczak, J. Swaczyna, 
Generalized kinds of density and the associated ideals, 
{\it Acta Math. Hungar.}, 147 (2015), 97--115.

\bibitem{Blass}
A. Blass,
Combinatorial cardinal characteristics of the continuum,
In M. Foreman, A. Kanamori, editors, Handbook of Set Theory, Springer, 2010.

\bibitem{Brendle-Shelah} 
J. Brendle, S. Shelah,
Ultrafilters on $\omega$ -- their ideals and their cardinal characteristics,
{\it Trans. Amer. Math. Soc.}, 351 (1999), 2643--2674.

\bibitem{Bukovska} 
Z. Bukovsk\'a, 
Quasinormal convergence,
{\it Math. Slovaca}, 41 (1991), 137--146.

\bibitem{Bukovsky} 
L. Bukovsk\'y, 
The Structure of the Real Line,
{\it Monogr. Mat.}, Springer-Birkhauser, Basel, 2011.

\bibitem{BDS} 
L. Bukovsk\'y, P. Das, J. \v{S}upina,
Ideal quasi-normal convergence and related notions,
{\it Coll. Math.}, 146 (2017), 265--281.

\bibitem{BRR}
L. Bukovsk\'y, I. Rec\l aw, M. Repick\'y,
Spaces not distinguishing pointwise and quasinormal convergence of real functions, 
{\it Top. App.}, 41 (1991), 25--40.

\bibitem{Canjar}
M. Canjar,
Cofinalities of countable ultraproducts: The existence theorem,
{\it Notre Dame J. Formal Logic}, 30 (1989), 309--312.

\bibitem{csaszar-laczkovich-1979}
{\'A}. Cs{\'a}sz{\'a}r, M. Laczkovich, 
Some remarks on discrete Baire classes, 
{\it Acta Math. Acad. Sci. Hungar.}, 33, No. 1-2 (1979), 51--70.

\bibitem{laczkovich-csaszar-equal-convergence-1975}
{\'A}. Cs{\'a}sz{\'a}r, M. Laczkovich, 
Discrete and equal Baire classes, 
{\it Acta Math. Hungar.}, 55, No. 1-2 (1990), 165--178.

\bibitem{Dascha}
P. Das, D. Chandra,
Spaces not distinguishing pointwise and I-quasinormal convergence of real functions,
{\it Comment. Math. Univ. Carolin.}, 54 (2013), 83--96.

\bibitem{DDP}
P. Das, S. Dutta, S.K. Pal,
On $\I$ and $\I^*$-equal convergence and an Egoroff-type theorem,
{\it Matemati\v{c}ki Vesnik}, 66 (2014), 165--177.

\bibitem{Dow}
A. Dow,
Two classes of Fr\'{e}chet--Urysohn spaces,
{\it Proc. Amer. Math. Soc.}, 108 (1990), 241--247.

\bibitem{Farah} 
I. Farah, 
Analytic quotients. Theory of lifting for quotients over analytic ideals on integers, 
{\it Mem. Amer. Math. Soc.}, 148 (2000).

\bibitem{Farkas-Soukup}
B. Farkas, L. Soukup, 
More on cardinal invariants of analytic P-ideals, 
{\it Comment. Math. Univ. Carolin.}, 50 (2009), 281--295.

\bibitem{fil-stan}
R. Filip{\'o}w, M. Staniszewski, 
On ideal equal convergence,
{\it Cent. Eur. J. Math.}, 12 (2014), 896--910.

\bibitem{Marcin-Rafal}
R. Filip\'ow, M. Staniszewski, 
Pointwise versus equal (quasi-normal) convergence via ideals, 
{\it J. Math. Anal. Appl.}, 422 (2015), 995--1006.

\bibitem{Hrusak2}
F. Hern\'andez-Hern\'andez, M. Hru\v{s}\'ak,
Cardinal invariants of analytic P-ideals,
{\it Canadian J. Math.}, 59 (2007), 575--595.

\bibitem{Hrusak} 
M. Hru\v{s}\'ak,
Combinatorics of filters and ideals, 
{\it Contemp. Math.}, 533 (2011), 29--69.
 
\bibitem{Hrusak3}
M. Hru\v{s}\'ak, D. Meza-Alc\'antara, H. Minami, 
Pair-splitting, pair-reaping and cardinal invariants of $\mathtt{F_\sigma}$ ideals,
{\it J. Symbolic Logic}, 75 (2010), 661--677.

\bibitem{Keremedis}
K. Keremedis, 
On the covering and the additivity number of the real line, 
{\it Proc. Amer. Math. Soc.}, 123 (1995), 1583--1590.

\bibitem{WR} 
A. Kwela,
A note on a new ideal, 
{\it J. Math. Anal. Appl.}, 430 (2015), 932--949.

\bibitem{EUvsSD} 
A. Kwela,
Erd\H{o}s-Ulam ideals vs. generalized density ideals, 
preprint.

\bibitem{KPST}
A. Kwela, M. Pop{\l}awski, J. Swaczyna, J. Tryba, 
Properties of simple density ideals, 
submitted.

\bibitem{zReclawem} 
A. Kwela, I. Rec{\l}aw,
Ranks of $\mathcal{F}$-limits of filter sequences, 
{\it J. Math. Anal. Appl.}, 398 (2013), 872--878.

\bibitem{zMarcinem} 
A. Kwela, M. Staniszewski,
Ideal equal Baire classes, 
{\it J. Math. Anal. Appl.}, 451 (2017), 1133--1153.

\bibitem{Mazur}
K. Mazur,
$\mathtt{F_\sigma}$-ideals and $\omega_1 \omega_1^*$-gaps in the Boolean algebras $\mathcal{P}(\omega)/I$,
{\it Fund. Math.}, 138 (1991), 104--111.

\bibitem{Meza}   
D. Meza-Alc\'antara,
Ideals and filters on countable set, 
Ph.D. thesis, Universidad Nacional Aut\'onoma de M\'exico,  M\'exico, 2009.

\bibitem{Mildenberger}
H. Mildenberger, 
There may be infinitely many near coherence classes under $\uu<\dd$, 
{\it J. Symbolic Logic}, 72 (2007), 1228--1238.

\bibitem{Scheepers}
M. Scheepers,
Sequential convergence in $C_p(X)$ and a covering property,
{\it East-West J. of Mathematics}, 1 (1999), 207--214.

\bibitem{Solecki}   
S. Solecki,
Analytic ideals, 
{\it The Bull. of Symb. Logic}, 2 (1996), 339--348.

\bibitem{Staniszewski}
M. Staniszewski, 
On ideal equal convergence II, 
{\it J. Math. Anal. Appl.}, 451 (2017), 1179--1197.

\bibitem{Supina}
J. {\v{S}}upina,
Ideal QN-spaces,
{\it J. Math. Anal. App.}, 435 (2016), 477--491. 

\bibitem{Tsaban}
B. Tsaban, L. Zdomskyy,
Hereditary Hurewicz spaces and Arhangel'ski\u\i\ sheaf amalgamations,
{\it J. Eur. Math. Soc.}, 14 (2012), 353--372.



\end{thebibliography}
\end{document}